\providecommand{\U}[1]{\protect\rule{.1in}{.1in}}
\providecommand{\U}[1]{\protect\rule{.1in}{.1in}}
\providecommand{\U}[1]{\protect\rule{.1in}{.1in}}
\providecommand{\U}[1]{\protect\rule{.1in}{.1in}}
\providecommand{\U}[1]{\protect\rule{.1in}{.1in}}
\newcommand{\ulambda}{{\boldsymbol{\lambda}}}
\newcommand{\umu}{{\boldsymbol{\mu}}}
\newcommand{\unu}{{\boldsymbol{\nu}}}
\newcommand{\uemptyset }{{\boldsymbol{\emptyset}}}
\newtheorem{Th}{Theorem}[section]
\newtheorem{Prop}[Th]{Proposition}
\theoremstyle{remark}
\newtheorem{Rem}[Th]{Remark}{\rmfamily}
\theoremstyle{definition}
{\rmfamily}
\newtheorem{Exa}[Th]{Example}{\rmfamily}
\newtheorem{abs}[Th]{\bfseries}
\begin{document}
\title{On the one dimensional representations  of  Ariki-Koike algebras at roots of unity}
\author{Nicolas Jacon}
\maketitle
\date{}
\begin{abstract}
We study the natural labeling of the one dimensional representations for Ariki-Koike algebras at roots of unity.
 For Hecke algebras of types $A$ and $B$, some of these representations can be identified with the socle of the Steinberg representation of a finite reductive group. We here give closed formulas for them. 
This uses, in particular,  several results concerning crystal isomorphisms and the Mullineux involution. 

\end{abstract}

\section{Introduction}
The Hecke algebras of complex reflection groups can be seen as natural deformations of complex reflection groups. In particular they both generalize  the Hecke algebras of type $A$ (deforming the symmetric groups) and the Hecke algebras of type $B$ (deforming the hyperoctohedral groups). The representation theory of these algebras has been well studied during the last  past decades. It appears to be quite  deep and related to the representation theory of various important algebraic objects (such as the quantum affine algebras, the quiver-Hecke algebras or the rational Cherednik algebras.) In type $A$ and $B$, a motivation for studying these representations comes from the representation theory of finite reductive groups where these Hecke algebras appear as endomorphism algebras of  permutation representations. 

Recently, a question on the combinatorial representation theory of these algebras has been  asked in a work of Meinolf Geck on the Steinberg representation of a finite reductive group $G$ of ``classical type'' over a field $L$ \cite{ST}. In the case where $[G:B]1_L=0$ (where $B$ is the Borel subalgebra of $G$),  this remarkable representation of $G$ is reducible in general but  one can show that its socle is simple. Using the so called ``Green correspondence'', Meinolf Geck has shown that one can identify this socle with the sign representation of the Hecke algebra (associated to the datum of $G$ and $B$) at a root of $1$. Now, a classification of the simple modules for these algebras is available. As the sign representation is one dimensional and thus irreducible,  the question of describing the above socle reduces to the problem of finding the precise labeling of the sign representation in terms of the known classification.

The aim of this work is to give, more generally, a complete description of all  the one dimensional representations in the wider context of  the Hecke algebra of the complex reflection group $G(l,1,n)$ where $l\in \mathbb{Z}_{>0}$ and $n\in \mathbb{Z}_{>0}$. This algebra (also known as Ariki-Koike algebra)   is defined as follows. Let $q$ be an indeterminate and $Q_1$, \ldots, $Q_l$ be an $l$-tuple of indeterminates. The Hecke algebra $\mathcal{H}:=\mathcal{H}(q,Q_1,\ldots,Q_l)$ of 
 $G(l,1,n)$ over the ring $A:=\mathbb{C} [q^{\pm 1} ,Q_1,\ldots ,Q_l]$ is  the associative $A$-algebra   generated by $T_{0},\cdots ,T_{n-1}$ subject to the relations $(T_{0}-Q_1)\ldots (T_{0}-Q_l)=0$, $(T_{i}-q
)(T_{i}+1)=0$, for $1\leq i\leq n-1$ together with the following ``braid relations'':
\begin{gather*}
(T_{0}T_{1})^{2}=(T_{1}T_{0})^{2},\quad T_{i}T_{i+1}T_{i}=T_{i+1}T_{i}T_{i+1}%
\text{ }(1\leq i<n), \\
T_{i}T_{j}=T_{j}T_{i}\text{ }(j\geq i+2).
\end{gather*}
Set $K=\textrm{Frac} (A)$  then by  Tits' deformation theorem, the irreducible representations of $K\mathcal{H}:=K\otimes_A \mathcal{H}$ are naturally labeled by the set of $l$-partitions of $n$ (or simply multipartitions) that is, the set of $l$-tuples of partitions (written in decreasing order) of total sum $n$. We denote $\ulambda \vdash_l n$ if $\ulambda$ is a $l$-partition $\ulambda =(\lambda^1,\ldots,\lambda^l)$ of $n$.  
  In fact, there exists a certain set of $\mathcal{H}$-modules $V^{\ulambda}$ with  $\ulambda \vdash_l n$ such that:
$$\textrm{Irr} ( K\mathcal{H})=\{ K\otimes_A V^{\ulambda} \ |\ \ulambda \vdash_l n\}.$$
 If we consider a specialization $\theta : A \to \mathbb{C}$, the specialized algebra $\mathbb{C}_{\theta}\mathcal{H}:=\mathbb{C}\otimes_A \mathcal{H}$ is non semisimple in general. As a consequence the specialized $\mathbb{C}_{\theta}\mathcal{H}$-modules $\mathbb{C}_{\theta}V^{\ulambda}:= \mathbb{C} \otimes_{A} V^{\ulambda}$ are non semisimple in general. 
 
 In this case, a reduction theorem by Dipper and Mathas asserts that it is sufficient to consider the following case to understand  the representation theory of our algebra (in the sense that knowing the labelings, the dimensions and the characters of the simple modules in this case is sufficient to know them in general):
 $$\theta (q)=\eta\in \mathbb{C}^{*},\ \theta (Q_j)=\eta^{s_j}\  (j=1,\ldots,l),$$
 where $\eta$ is a primitive $e$-root of $1$ where $e\in \mathbb{Z}_{>1}$  and ${\bf s}:=(s_1,\ldots,s_l)\in \mathbb{Z}^l$.  

Now, it is known, since  the works of Lascoux, Leclerc, Thibon and Ariki, that a   part of the representation theory of Ariki-Koike algebras is controlled by  the quantum affine algebra in affine type $A$ and its action on the Fock space.  This approach allows a natural labeling of the simple modules by a 
 certain subset of $l$-partitions $\Phi_{{\bf s},e }(n)$ known as the ``Uglov $l$-partitions'': 
 $$\textrm{Irr} ( \mathbb{C}_{\theta_{\bf s}} \mathcal{H})=\{ D_{{\bf s},e}^{\umu} \ |\ \umu \in \Phi_{{\bf s},e} (n) \}.$$
 If we  consider a one dimensional $\mathcal{H}$-module $V^{\ulambda}$. Then there exists $\ulambda_{\theta_{\bf s}}\in \Phi_{{\bf s},e }(n)$  such that 
  $\mathbb{C}_{\theta_{\bf s}} V^{\ulambda}\simeq D_{{\bf s},e }^{\ulambda_{\theta_{\bf s}}}$. The goal of this note is to explicitly calculate  $\ulambda_{\theta_{\bf s}}$. 
   To do this,  after stating the main problem we are interested in, several preparatory results must and will be made, first around the parametrization $\Phi_{{\bf s},e} (n)$ and then around the strategy to identify our one dimensional representations. The fifth part recalls and translates several results of crystal graph theory which will be needed in the proof of our main results in the sixth section. The last section deals specifically with the case where $l=2$  (that is the Hecke algebra of type $B$) which is the first motivation of this work. This in particular solves  the problem settled  in \cite[\S 3.8]{ST}.

 \paragraph{Acknowledgements.}
 The author thanks Meinolf Geck for asking him this problem and for useful discussion around it.   This work is supported by 
 Agence National de la Recherche Projet ACORT ANR-12-JS01-0003.

\section{The main problem} 

We here introduce several  notations and state our main problem. 
 
 \begin{abs} We keep the notations of the introduction. A natural labeling  of the irreducible representations for  $K\mathcal{H}$ may be obtained using a natural labeling of the irreducible representations of $\mathbb{C}G(l,1,n)$ together with Tits' deformation Theorem. This is done as follows. We have:
 $$\textrm{Irr} (\mathbb{C}G(l,1,n))=\{ E^{\ulambda} \ |\ \ulambda\vdash_l n\}.$$
 (We refer to \cite[\S 5.1.3]{GJ} for the construction of the irreducible $\mathbb{C}G(l,1,n)$-modules.)
 Now  the specialization $\theta_{\textrm{can}}:A \to \mathbb{C}$ sending $Q_j$ to $\eta_l^{j-1}$ for $j=1,\ldots,l$  (where $\eta_l:=\textrm{exp}(2i\pi/l)$) and $q$ to $1$ induces a canonical bijection between the 
  sets $\textrm{Irr} (KG(l,1,n))$ and $\textrm{Irr} (K\mathcal{H})$ (see for example \cite[\S 8.1.6]{GP}.) We can thus denote:
 $$\textrm{Irr} (K\mathcal{H})=\{ KV^{\ulambda} \ |\ \ulambda\vdash_l n\}.$$
In our case (the algebra $\mathcal{H}$ is cellular in the sense of Graham and Lehrer) and at the level of characters, this bijection may be seen as follows. Take a simple  $K\mathcal{H}$-module $KV$  with character $\chi:K\mathcal{H}\to K$. Then it can be shown that 
there exists  a $\mathcal{H}$-module $V$ with character $\chi_A :\mathcal{H} \to A$  such that $\chi_A$ extends to $\chi$ and $K\otimes_A V=KV$. Applying the specialization $\theta$ to $\chi_A$ leads to a  trace function $\chi_\mathbb{C}: \mathbb{C}G(l,1,n)  \to \mathbb{C}$, which  is the character of a
 simple  $\mathbb{C}G(l,1,n)$-module $E^{\ulambda}$. $KV$ is thus naturally labeled with $\ulambda$ and we can thus denote $KV=KV^{\ulambda}$, which is obtained from the $\mathcal{H}$-module $V$ by extension of scalars.

In particular, let us consider the labeling of the one dimensional representations for $K \mathcal{H}$. They are obtained by extension of scalars of the one dimensional representations for $\mathcal{H}$ 
given as follows. 
  \begin{enumerate}
 \item For $j=1,\ldots, l$, the representation  $\rho : \mathcal{H} \to A$ such that $\rho (T_0)=Q_j$ and $\rho (T_i) = q$ (for $i=1,\ldots, n-1$) is labeled by the $l$-partition $\ulambda$ such that $\lambda^i=\emptyset$ if $i\neq j$ and $\lambda^j=(n)$.  For convenience, 
 we denote by $[n,j]$ such a multipartition. 
 
 \item For $j=1,\ldots, l$, the representation  $\rho : \mathcal{H} \to A$ such that $\rho (T_0)=Q_j$ and $\rho (T_i) = -1$  (for $i=1,\ldots, n-1$)  is labeled by the $l$-partition $\ulambda$ such that $\lambda^i=\emptyset$ if $i\neq j$ and $\lambda^j=(1^n)$ (which means that $1$ is repeated $n$ times.)  We denote by $[1^n,j]$ such a multiparitition.
 \end{enumerate}
 We will denote by $\Lambda (n)$ the subset of $l$-partitions consisting in these two types of $l$-partitions. \end{abs}
 
\begin{abs}\label{spe} Let ${\bf s}=(s_1,\ldots,s_l)\in\mathbb{Z}^l$. Assume that we have a specialization $\theta_{{\bf s}} :A \to \mathbb{C}$ such that $\theta_{{\bf s}}  (q)=\eta$, a primitive root of unity of order $e>1$ and 
 $\theta_{{\bf s}}  (Q_i)=\eta^{s_i}$ for $1\leq i\leq l$. 
  We denote by $T^{\theta_{{\bf s}} }_0$, $T^{\theta_{{\bf s}} }_1$, \ldots, $T^{\theta_{{\bf s}} }_{n-1}$ the standard generators of the specialized algebra $\mathbb{C}_{\theta_{{\bf s}} } \mathcal{H}$.  
 %$${\bf s} \in \mathcal{A}_{e,l} := \{ {\bf s}\in \mathbb{Z}^l \ |\ s_1\leq \ldots \leq s_l,\ s_l-s_1<e\}$$ 
  For $\ulambda \vdash_l n$, let us  consider  
  the  $\mathcal{H}$-module $V^{\ulambda}$. After specialization, this module, which is denoted by $\mathbb{C}_{\theta_{{\bf s}} } V^{\ulambda}$,  is non simple (nor semisimple) in general but we have  an associated composition serie. Let us denote by 
 $[\mathbb{C}_{\theta_{{\bf s}} }V^{\ulambda} : M]$ the multiplicity of $M\in \operatorname{Irr} (\mathbb{C}_{\theta_{{\bf s}} } \mathcal{H})$ in such a composition serie (this is well-defined by the Jordan-H\"older theorem). Then the matrix defined by:
$$\mathcal{D}_{\theta_{{\bf s}} } :=( [\mathbb{C}_{\theta_{{\bf s}} } V^{\ulambda} : M])_{\ulambda \vdash_l n,M\in \operatorname{Irr} (\mathbb{C}_{\theta_{\bf s}} \mathcal{H})}$$
controls a part of the representation theory of $\mathbb{C}_{\theta_{{\bf s}} }\mathcal{H}$.This is called the decomposition matrix with respect to the specialization $\theta_{\bf s}$. It  can be  used to parametrize the simple modules by certain subset of $l$-partitions as we briefly recall now. 

\end{abs}

\begin{abs} They are several ways of indexing the simple modules of the Ariki-Koike algebras after specializations. To do this, we can 
 use the theory of canonical basic sets developed by Geck and Rouquier and generalized by Gerber \cite{Ge1}. We here follow \cite[Ch.5, Ch. 6]{GJ}.   Consider another $l$-tuple of integers $(v_1,\ldots,v_l)$ such that for all $i<j$ then $0\leq v_j-v_i<e$  and define ${\bf m}=(m_1,\ldots,m_l)\in \mathbb{Q}^l$ such that for all $j=1,\ldots,l$, $m_j=s_j-v_j$. 
  Then one can define a pre-order $\ll_{\bf m}$ on the set of  
 $l$-partitions which depends on the choice of ${\bf m}$. We don't give the definition of this pre-order here, all we need to know is the following theorem (see 
 \cite[\S 6.7]{GJ}). 
 \end{abs}
  \begin{Th}\label{basic} Under the above hypotheses, 
there exists  a subset $\Phi_{{\bf s},e} (n)$ of the set of $l$-partitions of rank $n$ such that for all $M\in \operatorname{Irr} (\mathbb{C}_{\theta_{{\bf s}} } \mathcal{H})$, 
  \begin{enumerate}
  \item there exists $\ulambda_M\in \Phi_{{\bf s},e} (n)$  such that $[\mathbb{C}_{\theta_{\bf s}} V^{\ulambda_M} : M]=1$,
  \item for all $\mu\vdash_l n$, if $[\mathbb{C}_{\theta_{\bf s}} V^{\umu} : M]\neq 0$ then $\umu\ll_{\bf m} \ulambda_M$.
  \end{enumerate}
  The map $M\mapsto \ulambda_M$ is injective. As a consequence, if for all $M\in \operatorname{Irr} (\mathbb{C}_{\theta_{{\bf s}} } \mathcal{H})$ we denote  $D_{{\bf s},e}^{\ulambda_M}:=M$, we have:
$$\operatorname{Irr} (\mathbb{C}_{\theta_{{\bf s}} } \mathcal{H})=\{ D_{{\bf s},e}^\umu \ |\ \umu \in \Phi_{{\bf s},e} (n)  \}.$$
  \end{Th}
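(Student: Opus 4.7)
The plan is to follow the canonical basic set strategy of Geck--Rouquier, extended to the Ariki--Koike setting by Jacon, Geck--Jacon, and (in the general multi-parameter form needed here) Gerber. The heart of the argument is to show that, after a suitable reordering, the decomposition matrix $\mathcal{D}_{\theta_{{\bf s}}}$ becomes unitriangular with respect to $\ll_{\bf m}$; then $\Phi_{{\bf s},e}(n)$ is read off as the set of labels of the distinguished rows.

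First, I would recall the combinatorial $a$-function $a_{\bf m}$ on the set of $l$-partitions of $n$, defined from the symbol (shifted $\beta$-numbers) of $\ulambda$ weighted by ${\bf m}$. This is the Lusztig-type $a$-invariant attached to the Hecke parameters, and the pre-order $\ll_{\bf m}$ of \cite{GJ} is essentially built from it: $\umu \ll_{\bf m} \ulambda$ forces $a_{\bf m}(\umu)\geq a_{\bf m}(\ulambda)$, with equality only when a dominance condition on the associated symbols also holds. The key point is that $a_{\bf m}$ controls the leading powers in the Schur element / generic degree formula, hence governs when one cell module can have a composition factor coming from another.

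Second, I would use the cellular (Dipper--James--Mathas) structure of $\mathcal{H}$: the modules $V^{\ulambda}$ are cell modules carrying a canonical symmetric bilinear form $\phi^{\ulambda}$; the nonzero heads $L^{\ulambda}:=V^{\ulambda}/\mathrm{rad}(\phi^{\ulambda})$ exhaust $\operatorname{Irr}(\mathbb{C}_{\theta_{{\bf s}}}\mathcal{H})$, and for each $\ulambda$ with $L^{\ulambda}\neq 0$ one has $[\mathbb{C}_{\theta_{{\bf s}}}V^{\ulambda}:L^{\ulambda}]=1$ together with the general cellular triangularity: $[\mathbb{C}_{\theta_{{\bf s}}}V^{\umu}:L^{\ulambda}]\neq 0$ only if $\ulambda$ dominates $\umu$ in the cellular partial order. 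The task reduces to identifying precisely which $\ulambda$ survive (giving $\Phi_{{\bf s},e}(n)$) and refining the cellular order to the finer pre-order $\ll_{\bf m}$.

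Third, to upgrade from the cellular order to $\ll_{\bf m}$, I would invoke Ariki's theorem, which identifies $\mathcal{D}_{\theta_{{\bf s}}}$ with the transition matrix from the standard basis to Uglov's canonical basis of the higher level Fock space $\mathcal{F}_{{\bf s}}$ over $U_v(\widehat{\mathfrak{sl}}_e)$. Writing the canonical basis element attached to $\ulambda\in \Phi_{{\bf s},e}(n)$ as $\mathcal{G}(\ulambda)=|\ulambda\rangle+\sum_{\umu} d_{\umu,\ulambda}(v)\,|\umu\rangle$ with $d_{\umu,\ulambda}(v)\in v\mathbb{Z}_{\geq 0}[v]$, the combinatorial definition of $\Phi_{{\bf s},e}(n)$ (Uglov $l$-partitions, characterised via the Kashiwara crystal) guarantees that every $\umu$ occurring on the right satisfies $\umu \ll_{\bf m} \ulambda$, which gives (1) and (2); the injectivity of $M\mapsto \ulambda_M$ then follows because $\{\mathcal{G}(\ulambda)\mid \ulambda\in\Phi_{{\bf s},e}(n)\}$ is linearly independent. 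The main obstacle is precisely this last combinatorial verification: showing that the Uglov canonical basis is compatible with the $a$-function ordering $\ll_{\bf m}$, which requires a detailed analysis of the crystal operators on $\mathcal{F}_{{\bf s}}$ and of how $a_{\bf m}$ behaves along the $e$-regularisation/ladder decomposition underlying Uglov's construction.
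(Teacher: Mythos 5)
The paper does not actually prove this theorem: it is imported as a black box from \cite[\S 6.7]{GJ} (with the more general pre-orders treated in \cite{Ge1}), and the author explicitly declines even to define $\ll_{\bf m}$. So there is no internal proof to compare yours against; the relevant comparison is with the argument in the cited references, and your outline is a faithful roadmap of that argument: cellularity of $\mathcal{H}$ gives the cell modules $V^{\ulambda}$ and the decomposition matrix, Ariki's theorem identifies $\mathcal{D}_{\theta_{\bf s}}$ evaluated at $v=1$ with the transition matrix of Uglov's canonical basis of the level-$l$ Fock space, and the theorem reduces to unitriangularity of that matrix with respect to $\ll_{\bf m}$, with $\Phi_{{\bf s},e}(n)$ the crystal labels. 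This is the same chain of results the paper itself rehearses later (in the proof of its Proposition on $\chi_{e,{\bf s},{\bf s}'}=\Psi_{e,{\bf s},{\bf s}'}$, which quotes Ariki's theorem and \cite[Thm 6.6.12]{GJ}).

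That said, as a proof your proposal is incomplete at exactly the point you flag as ``the main obstacle'': the compatibility $d^{\bf s}_{\umu,\ulambda}(v)\neq 0 \Rightarrow \umu\ll_{\bf m}\ulambda$ together with $d^{\bf s}_{\ulambda,\ulambda}=1$ is the entire mathematical content of the statement, and you only name it rather than establish it. In \cite{GJ} this step is not a soft consequence of cellularity (the cellular dominance order is not fine enough and does not single out $\Phi_{{\bf s},e}(n)$); it rests on Uglov's explicit construction of the canonical basis and on a careful comparison of the resulting order on $l$-partitions with the $a$-function order built from the shifted symbols with charge ${\bf m}=(s_j-v_j)_j$ — note that the auxiliary shift ${\bf v}$ with $0\leq v_j-v_i<e$ in the paper's setup is precisely what makes this comparison work. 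One smaller caution: you assert that $\umu\ll_{\bf m}\ulambda$ forces $a_{\bf m}(\umu)\geq a_{\bf m}(\ulambda)$ ``with equality only when a dominance condition holds''; since $\ll_{\bf m}$ is only a pre-order, distinct multipartitions can be equivalent under it, which is why the theorem claims injectivity of $M\mapsto\ulambda_M$ as a separate assertion rather than getting it for free from a partial order. So: right strategy, correctly sourced, but the decisive combinatorial lemma is assumed rather than proved.
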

\begin{Rem}
This approach has been generalized in \cite{Ge1} where more general pre-orders $\ll_{\bf m}$ are considered. 

\end{Rem}

\begin{abs}\label{pb}
Our main problem is now the following. Take $\ulambda\in \Lambda (n)$ then, we want to calculate $\ulambda_{\theta_{\bf s}}\vdash_l n $ which  is characterized as follows:
\begin{enumerate}
\item we have $[\mathbb{C}_{\theta_{\bf s}} V^{\ulambda_{{\theta}_{\bf s}}} :  \mathbb{C}_{\theta_{\bf s}} V^{\ulambda}   ]=1$,
\item if we have $[\mathbb{C}_{\theta_{\bf s}} V^{\umu}:  \mathbb{C}_{\theta_{\bf s}} V^{\ulambda}   ]\neq 0$ then 
  $\umu\ll_{\bf m} \ulambda_{\theta_{\bf s}}$.

\end{enumerate}
Indeed, by the above theorem, we obtain $D_{{\bf s},e}^{\ulambda_{\theta_{{\bf s}} }} = \mathbb{C}_{\theta_{\bf s}} V^{\ulambda_{\theta_{\bf s}}} $. 
In particular we have $\ulambda_{\theta_{\bf s}} \in  \Phi_{{\bf s},e} (n)$. We denote by $\Lambda_{{\bf s},e} (n)$ the 
 set consisting of all the $l$-partitions  $\ulambda_{\theta_{\bf s}}$ with $\ulambda \in \Lambda (n)$. 

 Note that if $e=2$, for all $j=1,\ldots,l$, the representations $\mathbb{C}_{\theta_{\bf s}} V^{[1^n,j]}$
   and  $\mathbb{C}_{\theta_{\bf s}} V^{[n,j]}$ are isomorphic. As a consequence we must have $[1^n,j]_{{\theta_{\bf s}} }=[n,j]_{{\theta_{\bf s}} }$ in this case.

\end{abs}
\section{Characterization of   the sets $\Phi_{{\bf s},e} (n)$}

In this part, we explain how one can compute the sets $\Phi_{{\bf s},e} (n)$  we have defined above and study several properties around these subsets of $l$-partitions. 

  \begin{abs}\label{good}
  The subsets $\Phi_{{\bf s},e} (n)$ are in fact certain subsets of $l$-partitions 
which label the crystal of some irreducible highest weight modules in quantum affine type $A$. A complete survey on this subject can be found  in \cite[Ch. 6]{GJ}. We can give a purely combinatorial (but recursive) definition of $\Phi_{{\bf s},e} (n)$   as follows. Let $\ulambda$ be an $l$-partition. The nodes of $\ulambda$ are by definition the elements of   the Young diagram of $\ulambda$:
$$[\ulambda]:=\{ (a,b,c)  \ | \ a\geq 1,\ c\in \{1,\ldots,l\},\ 1\leq b\leq \lambda_a^c\} \subset \mathbb{Z}_{>0}\times 
  \mathbb{Z}_{>0} \times \{1,\ldots,l\}.$$
  The residue of  a node $\gamma=(a,b,c)$ of $\ulambda$ is the element $b-a+s_c+e\mathbb{Z}$  of $\mathbb{Z}/e\mathbb{Z}$.  
If $[\ulambda]=[\umu] \cup \{\gamma\}$ for some $l$-partition of $n-1$, we say that   $\gamma \in [\ulambda]$ is a removable node for $\ulambda$ and an addable node for $\umu$.  If the residue of $\gamma $ is $i\in \mathbb{Z}/e\mathbb{Z}$, then we say that $\gamma$ is an $i$-node. We now define a total order on the set of removable and addable $i$-nodes of $\ulambda$ depending on ${\bf s}$ and $e$. Let $\gamma=(a,b,c)$ and $\gamma'=(a',b',c')$ be such two $i$-nodes. Then we denote 
$$\gamma \prec_{{\bf s},e} \gamma' \iff \left\{ \begin{array}{ll}
\text{either}&b-a+s_c<b'-a'+s_{c'}, \\
\text{or}&b-a+s_c=b'-a'+s_{c'} \text{ and }c>c'.
\end{array}\right.$$
For $\ulambda$ an $l$-partition, we can consider its set of addable and removable $i$-nodes.  Let $w_{i}(\ulambda)$ be the word obtained first by writing the
addable and removable $i$-nodes of ${\boldsymbol{\lambda}}$ in {increasing}
order with respect to $\prec _{{\mathbf{s}},e}$,  
next by encoding each addable $i$-node by the letter $A$ and each removable $%
i$-node by the letter $R$.\ Write $\widetilde{w}_{i}(\ulambda)=A^{p}R^{q}$ for the
word derived from $w_{i}$ by deleting as many of the factors $RA$ as
possible. $w_{i}(\ulambda)$ is called the $i(\textrm{mod }e)$-word of $\ulambda$ and 
 $\widetilde{w}_{i}(\ulambda)$ the reduced $i(\textrm{mod }e)$-word of $\ulambda$ .

If $p>0,$ let $\gamma $ be the rightmost addable $i$-node in $%
\widetilde{w}_{i}$. The node $%
\gamma $ is called the {good addable $i$-node}. If $r>0$, the leftmost removable $i$-node in 
 $\widetilde{w}_{i}$ is called the {good removable $i$-node}.

Then we have $\ulambda \in  \Phi_{{\bf s},e} (n)$ if and only if there exist a sequence of elements $g_{{\bf s},e}(\ulambda)=(i_1,\ldots,i_n)\in (\mathbb{Z}/e\mathbb{Z})^n$ and, for each $k=1,\ldots,n$, an $l$-partition  $\ulambda[k]$ of $k-1$ such that 
 $\ulambda[1]=\emptyset$ and $\ulambda [n]=\ulambda$ such that for all $k=2,\ldots,n$, 
 $[\ulambda[k]]=[\ulambda[k-1]]\cup \{\gamma\}$ for a good addable $i$-node for $\ulambda [k-1]$. Note that $\ulambda$ is entirely and uniquely determined by the datum of $g_{{\bf s},e}(\ulambda)$.

  \end{abs}
  \begin{Exa}
  Take $(s_1,s_2)=(2,0)$ and $e=4$. We consider the $2$-partition $((4),(2.1))$, the associated  pair of Young diagrams  (where we add the residue of each node in the associated box) is given by:

  $$
\left(
\begin{array}{|c|c|c|c|}
  \hline
  \ 2 \ &\ 3\  &\ 0\ &\ 1    \\
  \hline
\end{array}\;,\;
\begin{array}{|c|c|}
  \hline
  \ 0\  &\ 1\      \\
  \cline{1-2}
 \ 3\   \\
 \cline{1-1}
\end{array}
\right)$$

Let us look at the removable good $1$-node, we have two removable $1$-node $(1,4,1)$ and $(1,2,2)$
 and one addable $1$-node $(2,1,1)$. Moreover we have:
 $$(1,2,2)\prec_{{\bf s},e} (2,1,1)\prec_{{\bf s},e}   (1,4,1),  $$
 which gives 
  $${w}_1 ( ((4),(2.1)))= RAR.$$
  We conclude that $(1,4,1)$ is the good removable $1$-node of $((4),(2.1))$. Continuing in this way, we see that $((4),(2.1))$ is in $\Phi_{{\bf s},e} (7)$ with $g_{{\bf s},e} ((4),(2.1))=(2,0,3,3,0,1,1)$.
\end{Exa}
  \begin{abs}\label{aff}
Let $\widehat{\mathfrak{S}}_l$ be the (extended) affine symmetric group. This is defined as follows. We denote by $P_l:=\mathbb{Z}^l$ the $\mathbb{Z}$-module with  standard basis $\{y_i\ |\ i=1,\ldots, l\}$. For $i=1,\ldots,l-1$, we denote by $\sigma_i$ the transposition $(i,i+1)$ of  $\mathfrak{S}_l$. Then 
$\widehat{\mathfrak{S}}_l$ can be seen as 
  the semi-direct product $P_l \rtimes \mathfrak{S}_l$ where the relations  are given by $\sigma_i y_j=y_j \sigma_i$ for $j\neq i,i+1$ and $\sigma_i y_i \sigma_i=y_{i+1}$ for $i=1,\ldots,l-1$ and $j=1,\ldots,l$.  
This group acts faithfully on $\mathbb{Z}^l$  by setting for any ${{{\bf s}}}=(s_{1},\ldots ,s_{l})\in 
\mathbb{Z}^{l}$: %
$$\begin{array}{rcll}
\sigma _{c}.{{{\bf s}}}&=&(s_{1},\ldots ,s_{c-1},s_{c+1},s_{c},s_{c+2},\ldots ,s_{l})&\text{for }c=1,\ldots,l-1 \text{ and }\\
y_i.{{{\bf s}}}&=&(s_{1},s_{2},\ldots,s_i+e,\ldots ,s_{l})&\text{for }i=1,\ldots,l
\end{array}$$
A fundamental domain for this action is given by
$$\left\{(s_1,\ldots  ,s_{l})\in \mathbb{Z}^l\ |\ 0\leq s_1 \leq \ldots  \leq s_l <e  \right\}.$$
  \end{abs}
  
  \begin{abs} 
   Let ${\sigma}\in \widehat{\mathfrak{S}}_l$
 and assume that ${\bf s}\in \mathbb{Z}^l$. Set ${\bf s}':=\sigma.{\bf s}$.  
Recall that we have defined  a first specialization $\theta_{{\bf s}} :A \to \mathbb{C}$   in \S\ref{spe}  and let us  consider another specialization $\theta_{\sigma.{\bf s}} :A \to \mathbb{C}$ such that  $\theta_{\sigma.{\bf s}} (q)=\eta$ and 
 $\theta_{\sigma.{\bf s}} (Q_i)=\eta^{s_i'}$ for $1\leq i\leq l$. Then  there is an isomorphism of $\mathbb{C}$-algebras:
 $$\begin{array}{rcl}
  \mathbb{C}_{\theta_{\sigma.{\bf s}}} \mathcal{H}&\to& \mathbb{C}_{\theta_{{\bf s}}} \mathcal{H}\\
  T^{\theta_{\sigma.{\bf s}}}_0 & \mapsto & T_0^{\theta_{{\bf s}}} \\
  T^{\theta_{\sigma.{\bf s}}}_i & \mapsto & T_i^{\theta_{{\bf s}}} (i=1,\ldots,n-1).\\  
  \end{array}
  $$
  This induces an exact functor $\mathcal{F}$  from the category of finite dimensional $\mathbb{C}_{\theta_{{\bf s}}} \mathcal{H}$-modules to the category of finite dimensional 
  $\mathbb{C}_{\theta_{\sigma.{\bf s}}} \mathcal{H}$-modules. As a consequence, we obtain a bijection: 
 $$\Psi_{e,{\bf s},{\bf s}'} :  \Phi_{{\bf s},e} (n) \to  \Phi_{{\bf s}',e} (n),$$
 which is defined as follows. Let $\ulambda \in \Phi_{{\bf s},e} (n)$ then there exists $\Psi_{e,{\bf s},{\bf s}'} (\ulambda) \in \Phi_{{\bf s}',e} (n)$
  such that $\mathcal{F} (D_{{\bf s},e}^{\ulambda})=D_{{\bf s}',e}^{\Psi_{e,{\bf s},{\bf s}'} (\ulambda)}$. 

  Take $\ulambda \vdash_l n$ and consider the irreducible  $\mathbb{C}_{\theta_{{\bf s}}} \mathcal{H}$-module $\mathbb{C}_{\theta_{{\bf s}}} V^{\ulambda}$. Applying the functor $\mathcal{F}$ gives a  $\mathbb{C}_{\theta_{\sigma.{\bf s}}} \mathcal{H}$-module.
  We set $\sigma=\sigma_0.y$ with $\sigma_0\in \mathfrak{S}_l$ and $y\in P_l$.
   It is important to note that  we have $\theta_{\sigma.{\bf s}}=\theta_{\sigma_0 .{\bf s}}$ because $\eta$ is an $e$-root of $1$. 
   By the construction of the irreducible representations (see \cite[Ch. 13]{A}),   it is easily checked that this   $\mathbb{C}_{\theta_{\sigma.{\bf s}}} \mathcal{H}$-module is isomorphic to $\mathbb{C}_{\theta_{\sigma.{\bf s}}} V^{\sigma_0. \ulambda}$ where $\mathfrak{S}_l$ acts naturally on the set of $l$-partitions by permutation (that is $\sigma_0.\ulambda=(\lambda^{\sigma_0^{-1} (1)},\ldots, \lambda^{\sigma_0^{-1} (l)})$.)
  \begin{Exa}
  In particular, if  $\ulambda \in \Lambda (n)$, then there exists $i\in \{1,\ldots,l\}$ such that the $\mathbb{C}_{\theta_{{\bf s}}} \mathcal{H}$-module $\mathbb{C}_{\theta_{{\bf s}}} V^{\ulambda}$   sends  $T^{\theta_{{\bf s}}}_0$  to $\theta_{\bf s} (Q_i)$ and $T^{\theta_{{\bf s}}}_i$ to $\theta_{{\bf s}} (q)$ or $-1$. Hence 
  $\mathcal{F}(\mathbb{C}_{\theta_{{\bf s}}} V^{\ulambda})$ is the $\mathbb{C}_{\theta_{\sigma.{\bf s}}} \mathcal{H}$-module sending 
   $T_0^{\sigma.\theta_{{\bf s}}}$ to $\theta_{\sigma.{\bf s}} (Q_{\sigma_0  (i)})$. We obtain that
  $\mathcal{F}(\mathbb{C}_{\theta_{{\bf s}}} V^{\ulambda})=\mathbb{C}_{\theta_{\sigma.{\bf s}}} V^{\sigma_0. \ulambda}$.
  \end{Exa}
  As a consequence, if we restrict the bijection 
  $\Psi_{e,{\bf s},{\bf s}'}$ to  $\Lambda_{{\bf s},e} (n)$, we obtain a bijection which is, by a slight abuse of notation,  noted identically:
$$\begin{array}{rccl}
\Psi_{e,{\bf s},{\bf s}'} :  &{\Lambda}_{{\bf s},e} (n)& \to&  {\Lambda}_{{\bf s}',e} (n)\\
&\ulambda_{\theta_{\bf s}} & \mapsto & {(\sigma_0.\ulambda)}_{\theta_{{\bf s}'}}.
\end{array}
$$
 Note that we have for all  $M\in \operatorname{Irr} (\mathbb{C}_{\theta_{{\bf s}}} \mathcal{H})$ and  for all $\umu \vdash_l n$:
 \begin{eqnarray}\label{eq1}
 [\mathbb{C}_{\theta_{{\bf s}}} V^{\umu} : M]=[\mathbb{C}_{\theta_{\sigma.{\bf s}}} V^{\sigma_0. \umu} : \mathcal{F}(M)].
\end{eqnarray}
\end{abs}

\begin{abs} 
 On the other hand, as explained in \cite[\S 6.2.17]{GJ}, the crystal graph theory allows to construct a combinatorial bijection between the two sets. This is given as follows: let  $\ulambda \in \Phi_{{\bf s},e} (n)$ then  there exists a unique $\umu \in \Phi_{{\bf s}',e} (n)$
  such that $g_{{\bf s},e} (\ulambda)= g_{{\bf s}',e} (\umu)$, we set $\chi_{e,{\bf s},{\bf s}' }(\ulambda):=\umu$. This defines
a bijection 
  $$\chi_{e,{\bf s},{\bf s}'}  :  \Phi_{{\bf s},e} (n) \to  \Phi_{{\bf s}',e} (n).$$
The following result asserts that the two bijections we have just constructed actually coincide. In the proof, we will freely use several results from \cite{GJ}.  
\end{abs}

 \begin{Prop} Let ${\bf s}\in \mathbb{Z}^l$, $\sigma\in \widehat{\mathfrak{S}}_l$ and ${\bf s}':=\sigma.{\bf s}$. 
 For all $\ulambda\in   \Phi_{{\bf s},e} (n) $, we have $\chi_{e,{\bf s},{\bf s}'}  (\ulambda)=\Psi_{e,{\bf s},{\bf s}'}  (\ulambda)$. 
 
 \end{Prop}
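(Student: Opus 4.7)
The plan is to recognize both bijections as crystal isomorphisms between the abstract Kashiwara crystals supported by $\Phi_{\mathbf{s},e}(n)$ and $\Phi_{\mathbf{s}',e}(n)$, and then to invoke the uniqueness of such an isomorphism once the image of $\uemptyset$ is fixed. The point is that, as labeled directed graphs, $\Phi_{\mathbf{s},e}$ and $\Phi_{\mathbf{s}',e}$ are both realized as the connected component of the empty multipartition in Uglov's Fock-space crystal, and both realize the crystal of the same integrable highest weight $U_q(\widehat{\mathfrak{sl}}_e)$-module; the corresponding highest weight depends only on $\mathbf{s}$ modulo $e$ viewed as a multiset, hence is preserved under the action of $\widehat{\mathfrak{S}}_l$ described in \S\ref{aff}.

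First I would reduce to the case where $\sigma$ is one of the generators $\sigma_1,\ldots,\sigma_{l-1},y_1,\ldots,y_l$ of $\widehat{\mathfrak{S}}_l$. Both maps are multiplicative with respect to composition in $\widehat{\mathfrak{S}}_l$: for $\chi$ this is immediate from its characterization via the sequence $g_{\mathbf{s},e}$, and for $\Psi$ it follows from the transitivity of the algebra isomorphism used to define $\mathcal{F}$, combined with the fact that the basic-set labeling of simple modules is canonical.

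Next I would show that $\Psi_{e,\mathbf{s},\mathbf{s}'}$ is itself a crystal isomorphism. By the combinatorial construction recalled in \S\ref{good}, the crystal operator $\tilde{f}_i$ acts on $\Phi_{\mathbf{s},e}$ by addition of the good addable $i$-node. By Ariki's theorem together with the compatibility of the canonical basic set with the crystal recorded in \cite[Ch.~6]{GJ}, this operator is categorified, at the level of the Grothendieck group of simple modules, by $i$-restricted induction between the specialized Ariki-Koike algebras of ranks $n$ and $n+1$. The functor $\mathcal{F}$ is by definition the identity on the standard generators $T_0,T_1,\ldots,T_{n-1}$, so it trivially commutes with ordinary induction and restriction; it moreover preserves the $i$-block decomposition because an $i$-node corresponds to the eigenvalue $\eta^i$ of the relevant Jucys-Murphy element, a datum unchanged by $\mathcal{F}$ since $\eta$ is an $e$-th root of unity. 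Combined with equation (\ref{eq1}), this shows that $\mathcal{F}$ intertwines the $i$-restricted induction functors, so $\Psi$ intertwines the corresponding Kashiwara operators on labels.

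Finally, both $\chi_{e,\mathbf{s},\mathbf{s}'}$ and $\Psi_{e,\mathbf{s},\mathbf{s}'}$ send $\uemptyset$ to $\uemptyset$: the former trivially from the definition of $g_{\mathbf{s},e}$, the latter because the trivial representation at rank zero is preserved under $\mathcal{F}$. Since the Uglov crystal is connected and a crystal morphism between two connected crystals sending one highest weight vector to another is unique, the two bijections must coincide on all of $\Phi_{\mathbf{s},e}(n)$. The main obstacle is the careful verification that $\mathcal{F}$ intertwines the $i$-induction/restriction functors that categorify the Kashiwara operators; this amounts to tracking that the $i$-residue condition on nodes translates into the same block condition at both specializations, and is the step where reducing to the generators $\sigma_i$ and $y_i$ of $\widehat{\mathfrak{S}}_l$ and analyzing how residues transform under each of them becomes essential.
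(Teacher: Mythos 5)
Your proof is essentially correct, but it follows a genuinely different route from the paper's. The paper works at the level of decomposition matrices: it invokes Ariki's theorem to identify the columns $[\mathbb{C}_{\theta_{\bf s}}V^{\unu}:D^{\ulambda}_{{\bf s},e}]$ with canonical basis coefficients $d^{\bf s}_{\unu,\ulambda}(1)$ of the Fock space, uses the invariance of these coefficients under the $\widehat{\mathfrak{S}}_l$-action (\cite[\S 6.4.10]{GJ}) to get $d^{\bf s}_{\unu,\ulambda}(1)=d^{{\bf s}'}_{\sigma_0.\unu,\chi(\ulambda)}(1)$, and then combines this with equation (\ref{eq1}) and the fact that a simple module is determined by its column of decomposition numbers to conclude $\mathcal{F}(D^{\ulambda}_{{\bf s},e})=D^{\chi(\ulambda)}_{{\bf s}',e}$. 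You instead categorify the Kashiwara operators by $i$-induction/restriction, observe that $\mathcal{F}$ (being induced by an algebra isomorphism that is the identity on generators, with matching residue data since the $Q_j$ only change by permutation and by powers of $\eta^e=1$) commutes with these functors, and conclude by uniqueness of isomorphisms of connected highest weight crystals fixing $\uemptyset$. Both arguments are sound, but note what each buys: your route requires as external input the compatibility of the canonical basic set labeling of Theorem \ref{basic} with the modular branching graph (the statement that $\operatorname{soc}(i\text{-}\mathrm{Res}\,D^{\ulambda}_{{\bf s},e})=D^{\tilde{e}_i\ulambda}_{{\bf s},e}$), which is a separate and arguably deeper theorem than the coefficient-invariance statement the paper cites; in exchange it is more conceptual and explains \emph{why} the two bijections must agree. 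Your reduction to generators of $\widehat{\mathfrak{S}}_l$ is harmless but not needed in either approach. One point you should make explicit if you flesh this out: $\chi_{e,{\bf s},{\bf s}'}$ is defined in \S\ref{good} only by matching the distinguished sequences $g_{{\bf s},e}$, so you need to say why matching these particular paths from $\uemptyset$ already makes it the (unique) crystal isomorphism rather than merely a bijection agreeing with it along one path per vertex; this is standard but deserves a sentence.
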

 \begin{proof}
 The proof is based on Ariki's Theorem relying the decomposition matrix of Ariki-Koike algebras with the canonical bases for Fock spaces. 
  By definition, the Fock space $\mathfrak{F}_{\bf s}$ is the $\mathbb{Q} (v)$-vector space (where $v$ is an indeterminate) with basis given by the symbols $|\ulambda, {\bf s} \rangle$ with $\ulambda \vdash_l n$ and $n\in \mathbb{Z}_{\geq 0}$. One can define an action of the quantum affine algebra of type $A$ on this space. The submodule generated by $|\uemptyset, {\bf s} \rangle$ is then an irreducible highest weight module (where $\uemptyset=(\emptyset,\ldots,\emptyset)$) and we have an associated canonical basis which is  indexed by the set $\Phi_{{\bf s},e}=\cup_{n\in \mathbb{Z}_{\geq 0}} \Phi_{{\bf s},e}(n)$:
  $$\{ G(\ulambda,{\bf s})\ |\ \ulambda \in \Phi_{{\bf s},e}\}.$$
Fix $n\in \mathbb{Z}_{>0}$ then for all  $\ulambda \in \Phi_{{\bf s},e}(n)$ and $\unu \vdash_l n$, there exist $d^{\bf s}_{\unu,\ulambda} (v) \in \mathbb{N}[v]$ such that:
  $$G(\ulambda,{\bf s})=\sum_{\unu\vdash_l n} d^{\bf s}_{\unu,\ulambda} (v) |\unu,{\bf s} \rangle.$$
  Now we fix $\ulambda \in \Phi_{{\bf s},e} (n)$.   By  Ariki's Theorem (see \cite[Thm 6.2.21]{GJ}),   there exists $M\in \textrm{Irr} (\mathbb{C}_{\theta_{\bf s}} \mathcal{H})$ such that  for all $\unu\vdash_l n$,
  $$d^{\bf s}_{\unu,\ulambda} (1)=[\mathbb{C}_{{\theta_{\bf s}}} V^{\unu}:M].$$
  By \cite[Thm 6.6.12]{GJ}  (see also  Thm \ref{basic}),   we have $M=D^{\ulambda}_{{\bf s },e}$. 
  On the other hand,  the submodule generated by $|\uemptyset, {\bf s}' \rangle$ in the Fock space $\mathfrak{F}_{\bf s '}$ admits also a canonical basis 
  $$\{ G(\umu,{\bf s}')\ |\ \umu \in \Phi_{{\bf s}',e}\}.$$
 Set $\ulambda':=\chi_{e,{\bf s},{\bf s}'} (\ulambda)   \in \Phi_{{\bf s}',e}(n)$.   By  Ariki's Theorem again,   there exists $M'\in \textrm{Irr} (\mathbb{C}_{\theta_{{\bf s}'}} \mathcal{H})$ such that  for all $\unu\vdash_l n$,
  $$d^{{\bf s}'}_{\unu,\ulambda'} (1)=[\mathbb{C}_{{\theta_{{\bf s}'}}} V^{\unu}:M'].$$
  Again by \cite[Thm 6.6.12]{GJ}, we have $M'=D^{\ulambda'}_{{\bf s }',e}$. Now by \cite[\S 6.4.10]{GJ},  we have 
    $$d^{\bf s}_{\unu,\ulambda} (1)= d^{{\bf s}'}_{\sigma_0.\unu, \ulambda'} (1),$$
  and this leads to:
  $$[\mathbb{C}_{{\theta_{\bf s}}} V^{\unu}:   D^{\ulambda}_{{\bf s },e}]=
  [\mathbb{C}_{{\theta_{\sigma.{\bf s}}}} V^{\sigma_0.\unu}:   D^{\ulambda'}_{{\bf s },e}],$$
  for all $\unu\vdash_l n$.  Combining this with (\ref{eq1})  and taking into account that the above decomposition numbers determine uniquely the simple modules,   we obtain that  $D^{\ulambda'}_{{\bf s },e}=\mathcal{F} (D^{\ulambda}_{{\bf s },e})$ which is exactly what we wanted.

% Now by \cite[\S 6.4.10]{GJ},  we have 
%    $$d^{\bf s}_{\unu,\ulambda} (1)= d^{{\bf s}'}_{\sigma_0^{-1}\unu, \ulambda'} (1)$$
% In addition by \cite[Thm 6.6.12]{GJ},  we have  $d^{\bf s'}_{\unu,\chi_{e,{\bf s},{\bf s}'} (\ulambda)} (v)\neq 0$ only if $\umu \ll_{\bf m} ,\chi_{e,{\bf s},{\bf s}'} (\ulambda)$ or $\unu=\chi_{e,{\bf s},{\bf s}'} (\ulambda)$ and 
%$d^{\bf s'}_{\chi_{e,{\bf s},{\bf s}'} (\ulambda),\chi_{e,{\bf s},{\bf s}'} (\ulambda)} (v)\neq 0$

%Thus, we have 
%$[\mathbb{C}_{\theta} V^{\chi_{e,{\bf s},{\bf s}'} (\ulambda)} : M]=1$ and 
% $[\mathbb{C}_{\theta} V^{\unu} : M]\neq 0$ implies $\unu \ll_{\bf m '} \chi_{e,{\bf s},{\bf s}'} (\ulambda)$ where ${\bf m}'\in \mathbb{Q}^l$ such that for all $j=1,\ldots,l$, $m_j'=s_j'-v_j$.  This concludes the proof. 

  \end{proof}
  \begin{abs}\label{strate} We end this section by giving 
  one  strategy to solve our problem \S \ref{pb}:
  \begin{enumerate}
 \item We find  all the elements $\ulambda_{\theta_{\bf s}}$ for all  $\ulambda \in \Lambda (n)$ and for all specialisation $\theta_{\bf s}$  
  where  ${\bf s} \in \mathcal{A}_{e,l}$, a domain contained in a fundamental domain with respect to the action of $\widehat{\mathfrak{S}}_l$
  on $\mathbb{Z}^l$. 
  \item Let ${\bf s}\in \mathbb{Z}^l$ and and let $\ulambda \in  \Lambda (n)$ then there exists $\sigma \in \widehat{\mathfrak{S}}_l$
   such that $\sigma.{\bf s}'= {\bf s}$ with ${\bf s}' \in \mathcal{A}_{e,l}$. 
    Then consider $\sigma_0^{-1} .\ulambda \in \Lambda (n)$. We can compute $(\sigma_0^{-1} \ulambda)_{{\theta}_{{\bf s}'}}$ by $(1)$.
    \item We have $(\sigma_0^{-1} \ulambda)_{\theta_{{\bf s}'}}\in \Phi_{{\bf s}',e} (n)$ and we can consider the sequence $g_{{\bf s}',e} ((\sigma_0^{-1} \ulambda)_{{\theta}_{{\bf s}'}})$. 
    \item By the discussion above, $\ulambda_{\theta_{\bf s}}$ is then uniquely determined by its sequence 
$g_{{{\bf s}},e} ( \ulambda_{\theta_{\bf s}})=g_{{{\bf s}'},e} ((\sigma_0^{-1} \ulambda)_{{\theta}_{{\bf s}'}})$.
\end{enumerate}

  \end{abs}

%  
%\begin{Rem}
%We don't need the definition of the above order, nor the precise definition of the FLOTW $l$-partitions here (which is given in \cite[\S 5.7.8]{GJ}) but the important point is that they are easy to characterize and that they have a non recursive definition. 
%\end{Rem}

 \section{One dimensional representations and Mullineux involution}

Now our main problem can be rephrased as follows. Let $\ulambda \in \Lambda (n)$ then there exists $\ulambda_{\theta} \in \Phi_{{\bf s},e} (n)$ such that 
$$\mathbb{C}_{\theta_{\bf s}} V^{\ulambda}\simeq D_{{\bf s},e}^{\ulambda_{\theta_{\bf s}}}.$$
We want  to find explicitly $\ulambda_{\theta_{\bf s}}$ in terms of $\ulambda$ for all ${\bf s}\in \mathbb{Z}^l$ and $e\in \mathbb{Z}_{>1}$ . To do this, we will proceed in several steps. For the first step, the aim is to study the case where ${\bf s} \in \mathcal{A}_{e,l}$ where:
$$\mathcal{A}_{e,l}=\{ {\bf s}=(s_1,\ldots,s_l)\in \mathbb{Z}^l\ |\ \forall i<j,\ 0\leq s_j-s_i<e\}.$$
Note that the fundamental domain in \S \ref{aff} is contained in this set. 
We assume that ${\bf s} \in \mathcal{A}_{e,l}$.  In this  case, the set $\Phi_{{\bf s},e} (n)$ can be explicitly described without any references to the theory of crystal graphs. This set is given by the so called FLOTW $l$-partitions (see \cite[Def. 5.7.8]{GJ}), we recall the definition of them hereafter. 
\begin{abs}\label{flotw}
Recall that we assume that  ${\bf s} \in \mathcal{A}_{e,l}$.  We have $\ulambda=(\lambda^1,\ldots,\lambda^l)\in \Phi_{{\bf s},e} (n)$ 
 if and only if:
 \begin{enumerate}
 \item For all $j=1,\ldots,l-1$ and $i\in \mathbb{Z}_{>0}$, we have:
 $$\lambda_i^j\geq \lambda_{i+s_{j+1}-s_j}^{j+1}.$$
 \item For all $i\in \mathbb{Z}_{>0}$, we have:
 $$\lambda_i^{l}\geq \lambda_{i+e+s_{1}-s_l}^{1}.$$ 
 \item For all $k\in \mathbb{Z}_{>0}$, the set 
 $$\{ \lambda_i^j-i+s_j+e\mathbb{Z}\ |\ i\in \mathbb{Z}_{>0},\ \lambda_i^j=k, j=1,\ldots,l\},$$
 is a proper subset of $\mathbb{Z}/e\mathbb{Z}$. 
 
 \end{enumerate}
(In the above definition, the partitions are considered with an infinite number of empty parts)

\end{abs}

\begin{abs}\label{disc1}
First, note that $1$-dimensional representations may induced isomorphic modules after specialization. Indeed, let us consider $i_1=1<i_2, \ldots<i_k$ in $\{1,\ldots,l\}$ such that 
$s_{1}=\ldots =s_{i_2-1}$, $s_{i_2-1}<s_{i_2}$,   $s_{i_2}=\ldots =s_{i_3-1}$,  $s_{i_3-1}<s_{i_3}$,
 \ldots $s_{i_{k-1}-1}<s_{i_k}$,   $s_{i_k}=\ldots =s_{l}$, then we have for all $j=1,\ldots,k$:
 \begin{equation}\label{s1}
 \mathbb{C}_{\theta_{\bf s}}V^{[n,i_j]}\simeq \mathbb{C}_{\theta_{\bf s}}V^{[n,i_j+1]} \simeq \ldots \simeq \mathbb{C}_{\theta_{\bf s}}V^{[n,i_{j+1}-1]}\textrm{ and }
 \mathbb{C}_{\theta_{\bf s}}V^{[1^n,i_j]}\simeq \mathbb{C}_{\theta_{\bf s}}V^{[1^n,i_j+1]} \simeq  \ldots \simeq \mathbb{C}_{\theta_{\bf s}}V^{[1^n,i_{j+1}-1]},
 \end{equation}
 where we set $i_{k+1}:=l+1$. 
 Our main problem can now be easily solved in ``half'' of the cases.
\end{abs}
\begin{Prop}\label{triv}
For all $j=1,\ldots,k$, we have:
$$[n,i_j]_{\theta_{\bf s}}=[n,i_j+1]_{\theta_{\bf s}}=\ldots=[n,i_{j+1}-1]_{\theta_{\bf s}}=[n,i_j].$$ 
\end{Prop}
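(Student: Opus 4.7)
My plan is to split the statement into two pieces: first, the string of equalities $[n,i_j]_{\theta_{\bf s}} = [n,i_j+1]_{\theta_{\bf s}} = \ldots = [n,i_{j+1}-1]_{\theta_{\bf s}}$, and second, the identification of this common value with $[n,i_j]$. The first piece I would read off directly from the isomorphisms (\ref{s1}): the label $\ulambda_{\theta_{\bf s}}$ defined in \S\ref{pb} depends only on the isomorphism class of the specialized module, because by Theorem~\ref{basic} each simple $\mathbb{C}_{\theta_{\bf s}}\mathcal{H}$-module corresponds to a unique element of $\Phi_{{\bf s},e}(n)$; hence pairwise isomorphic specializations force equal labels.

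For the second piece, my strategy is to show that $[n,i_j]$ already belongs to $\Phi_{{\bf s},e}(n)$ and then to invoke a dimension count. Once $[n,i_j]\in\Phi_{{\bf s},e}(n)$, applying Theorem~\ref{basic}(1) to $\ulambda=[n,i_j]$ yields
$$[\mathbb{C}_{\theta_{\bf s}}V^{[n,i_j]}:D^{[n,i_j]}_{{\bf s},e}]=1,$$
so $D^{[n,i_j]}_{{\bf s},e}$ occurs as a composition factor of $\mathbb{C}_{\theta_{\bf s}}V^{[n,i_j]}$. But the left-hand module is one-dimensional (it is the specialization of a one-dimensional representation of $\mathcal{H}$), hence already simple, so it must be isomorphic to $D^{[n,i_j]}_{{\bf s},e}$. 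The defining property of $\ulambda_{\theta_{\bf s}}$ recalled in \S\ref{pb} then yields $[n,i_j]_{\theta_{\bf s}}=[n,i_j]$.

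The main (and essentially only) computation is therefore the FLOTW verification of \S\ref{flotw} for $\ulambda=[n,i_j]$, where only $\lambda^{i_j}=(n)$ is nonempty. Condition (3) is immediate: the only nonempty family occurs at $k=n$ and is the singleton $\{n-1+s_{i_j}+e\mathbb{Z}\}$, which is proper in $\mathbb{Z}/e\mathbb{Z}$ since $e>1$. For the inequalities in (1) and (2), almost all instances reduce to $0\geq 0$; two cases require checking. First, in condition (1) the transition $\lambda^{i_j-1}\to\lambda^{i_j}$ when $i_j\geq 2$ demands $s_{i_j}-s_{i_j-1}\geq 1$, which holds because by the description of $i_1,\ldots,i_k$ in \S\ref{disc1} each $i_j$ with $j\geq 2$ is the start of a new constancy block of ${\bf s}$. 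Second, in condition (2) the wrap-around when $i_j=1$ demands $e+s_1-s_l\geq 1$, which holds because ${\bf s}\in\mathcal{A}_{e,l}$ enforces $s_l-s_1<e$. I expect this bookkeeping to be the only nontrivial step; the rest of the argument is purely formal, relying on the abstract framework of Theorem~\ref{basic} together with the simplicity of a one-dimensional module.
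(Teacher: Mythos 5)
Your proof is correct and follows essentially the same route as the paper: the string of equalities comes from the isomorphisms (\ref{s1}), membership of $[n,i_j]$ in $\Phi_{{\bf s},e}(n)$ is checked against the FLOTW conditions of \S\ref{flotw}, and simplicity of the one-dimensional specialized module combined with Theorem~\ref{basic} forces $[n,i_j]_{\theta_{\bf s}}=[n,i_j]$. The only difference is that you carry out explicitly the FLOTW verification (correctly, including the two genuinely nontrivial inequalities) where the paper simply asserts it is ``direct to see.''
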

\begin{proof}
From (\ref{s1}), we immediately get that $[n,i_j]_{\theta_{\bf s}}=[n,i_j+1]_{\theta_{\bf s}}=\ldots=[n,i_{j+1}-1]_{\theta_{\bf s}}$. Now from the definition above, it is direct to see that $[n,i_j]\in \Phi_{{\bf s},e} (n)$ . As in addition $\mathbb{C}_{\theta_{\bf s}}V^{[n,i_j]}$ is simple we have 
$\mathbb{C}_{\theta_{\bf s}} V^{[n,i_j]}\simeq M$ for a simple module $M\simeq D_{{\bf s},e}^{[n,i_j]}  $ which is exactly what we wanted. 
\end{proof}
\begin{abs}\label{flows}
Now the FLOTW $l$-partitions of the above types  are characterized by the sequence $g_{{\bf s},e} ([n,i_j])$ of ``good nodes''. This sequence is easy to have in this case, it is indeed given by the following:
$$g_{{\bf s},e} ([n,i_j])=(s_{i_j}+e\mathbb{Z},s_{i_j}+1+e\mathbb{Z},\ldots,s_{i_j}+n-1+e\mathbb{Z}).$$

\end{abs}

\begin{abs}
For the one dimensional representation indexed by $[1^n,i_j]$ ($j=1,\ldots,k$), the problem is more difficult  as such $l$-partitions are  not FLOTW $l$-partitions in general. 
 To solve it, let us consider another specialization $\widetilde{\theta}_{\bf s}: A \to \mathbb{C}$ such that 
 $$\widetilde{\theta}_{\bf s} (q)=\xi,\ \widetilde{\theta}_{\bf s} (Q_j)=  \xi^{-s_{l+1-j}},\ j=1,\ldots,l,$$
 where $\xi:=\eta^{-1}$. 
We denote by ${T}^{\widetilde{\theta}_{\bf s}}_0,\widetilde{T}^{\widetilde{\theta}_{\bf s}}_1, \ldots, \widetilde{T}^{\widetilde{\theta}_{\bf s}}_{n-1}$ the associated standard generators.   
   Note that the relations in $\mathbb{C}_{\widetilde{\theta}_{\bf s}}{\mathcal{H}}$ are given as follows:
   $$({T}^{\widetilde{\theta}_{\bf s}}_{0}-\xi^{-s_l})\ldots ({T}^{\widetilde{\theta}_{\bf s}}_{0}-\xi^{-s_1})=0,\  ({T}^{\widetilde{\theta}_{\bf s}}_{i}-\xi
)(\widetilde{T}^{\widetilde{\theta}_{\bf s}}_{i}+1)=0,\ i=1,\ldots,n-1.$$ 
 As $-{\bf s}:=(-s_l,\ldots,-s_1)\in \mathbb{Z}^l$ and $\xi$ is still a primitive $e$-root of $1$,  we can use the results of Theorem \ref{basic}.  In particular we have a parametrization of the simple modules by a certain set of  $l$-partitions 
   $\Phi_{-{\bf s},e} (n)$ which can be recursively described as in \S \ref{good}. 
   $$\operatorname{Irr} (\mathbb{C}_{\widetilde{\theta}_{\bf s}}\mathcal{H})=\{ {D}_{-{\bf s},e}^\umu \ |\ \umu \in \Phi_{-{\bf s},e} (n)  \}.$$

  Following an idea of M. Fayers \cite{Ma}, we consider the isomorphism of $\mathbb{C}$-algebra sending ${T}^{\widetilde{\theta}_{\bf s}}_0$ to $T^{{\theta}_{\bf s}}_0$ and 
${T}^{\widetilde{\theta}_{\bf s}}_i$ to $-\xi T^{{\theta}_{\bf s}}_i$ for $i=1,\ldots,n-1$. This, in turn, induces a functor $F$ from the category of 
 $\mathbb{C}_{\theta_{\bf s}}\mathcal{H}$-modules to the category of  $\mathbb{C}_{\widetilde{\theta}_{\bf s}}{\mathcal{H}}$-modules. 
    For all $\ulambda \in \Phi_{{\bf s},e} (n)$, there exists a bijection 
    $$m_{{\bf s},e}:   \Phi_{{\bf s},e} (n)\to \Phi_{-{\bf s},e} (n),$$
 such that:
    $$F(D_{{\bf s},e}^{\ulambda})={D}_{-{\bf s},e}^{m(\ulambda)}.$$
This map satisfies $m_{{\bf s},e} \circ m_{-{\bf s},e}=\textrm{Id}_{\Phi_{-{\bf s},e} (n)  }$  and $m_{-{\bf s},e} \circ m_{{\bf s},e}=\textrm{Id}_{\Phi_{{\bf s},e} (n)  }$
         and can be seen as a generalization of the Mullineux involution. Indeed, in the case $l=1$, this involution 
         is simply the usual Mullineux involution which we denote  by $m_e$.
         
\end{abs}
\begin{Prop}\label{mull}
For $j=1,\ldots,k$, we have $[1^n,i_j]_{\theta_{{\bf s}}}=m_{-{\bf s},e}([n,l+2-i_{j+1}])$ where $i_{k+1}:=l+1$. 

\end{Prop}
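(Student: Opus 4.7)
The plan is to apply the Fayers functor $F$ directly to the one-dimensional $\mathbb{C}_{\theta_{\bf s}}\mathcal{H}$-module $\mathbb{C}_{\theta_{\bf s}}V^{[1^n,i_j]}$. Since $F(D^{\umu}_{{\bf s},e})=D^{m_{{\bf s},e}(\umu)}_{-{\bf s},e}$, as soon as one identifies the image as a standard module $D^{\unu}_{-{\bf s},e}$, one obtains $m_{{\bf s},e}([1^n,i_j]_{\theta_{\bf s}})=\unu$, and applying $m_{-{\bf s},e}$ (the inverse of $m_{{\bf s},e}$) will yield $[1^n,i_j]_{\theta_{\bf s}}=m_{-{\bf s},e}(\unu)$. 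The whole proof then reduces to computing $F$ on this one module.

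First I would read off the action on $F(\mathbb{C}_{\theta_{\bf s}}V^{[1^n,i_j]})$ from the defining isomorphism $T_0^{\widetilde{\theta}_{\bf s}}\mapsto T_0^{\theta_{\bf s}}$, $T_i^{\widetilde{\theta}_{\bf s}}\mapsto -\xi T_i^{\theta_{\bf s}}$: the generators act by $T_0^{\widetilde{\theta}_{\bf s}}\mapsto \eta^{s_{i_j}}=\xi^{-s_{i_j}}$ and $T_i^{\widetilde{\theta}_{\bf s}}\mapsto -\xi\cdot(-1)=\xi$ for $i\geq 1$. This is a ``trivial-type'' one-dimensional representation of $\mathbb{C}_{\widetilde{\theta}_{\bf s}}\mathcal{H}$, hence isomorphic to $\mathbb{C}_{\widetilde{\theta}_{\bf s}}V^{[n,i']}$ for any $i'$ with $\widetilde{\theta}_{\bf s}(Q_{i'})=\xi^{-s_{l+1-i'}}=\xi^{-s_{i_j}}$. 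Since ${\bf s}\in\mathcal{A}_{e,l}$ makes equality modulo $e$ of two coordinates equivalent to strict equality, this rewrites as $l+1-i'\in\{i_j,\ldots,i_{j+1}-1\}$, i.e.\ $i'\in\{l+2-i_{j+1},\ldots,l+1-i_j\}$.

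The next step is to invoke Proposition \ref{triv} with the parameter $-{\bf s}=(-s_l,\ldots,-s_1)$ in place of ${\bf s}$. A direct check shows that $-{\bf s}$ still belongs to $\mathcal{A}_{e,l}$, and that its plateaus are the reverses of those of ${\bf s}$, with block starting indices $1,\,l+2-i_k,\,l+2-i_{k-1},\ldots,l+2-i_2$. A short index bookkeeping step identifies $\{l+2-i_{j+1},\ldots,l+1-i_j\}$ as precisely the $(k+1-j)$-th plateau of $-{\bf s}$, whose first element is $l+2-i_{j+1}$. Proposition \ref{triv} applied to $-{\bf s}$ therefore gives $\mathbb{C}_{\widetilde{\theta}_{\bf s}}V^{[n,i']}\simeq D^{[n,l+2-i_{j+1}]}_{-{\bf s},e}$ for every $i'$ in that plateau.

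Combining these two descriptions, $D^{m_{{\bf s},e}([1^n,i_j]_{\theta_{\bf s}})}_{-{\bf s},e}=F(\mathbb{C}_{\theta_{\bf s}}V^{[1^n,i_j]})=D^{[n,l+2-i_{j+1}]}_{-{\bf s},e}$, so $m_{{\bf s},e}([1^n,i_j]_{\theta_{\bf s}})=[n,l+2-i_{j+1}]$, and applying $m_{-{\bf s},e}$ to both sides yields the claim. All the serious representation-theoretic input (Fayers' functor and the generalised Mullineux bijection) is already in place; the main technical nuisance is really just the careful index tracking under the reversal $(s_1,\ldots,s_l)\mapsto(-s_l,\ldots,-s_1)$ and confirming $-{\bf s}$ stays in $\mathcal{A}_{e,l}$ so that Proposition \ref{triv} is applicable there.
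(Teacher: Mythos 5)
Your proposal is correct and follows essentially the same route as the paper: apply Fayers' functor $F$ to $\mathbb{C}_{\theta_{\bf s}}V^{[1^n,i_j]}$, compute the resulting one-dimensional representation of $\mathbb{C}_{\widetilde{\theta}_{\bf s}}\mathcal{H}$, identify it via Proposition \ref{triv} applied to $-{\bf s}\in\mathcal{A}_{e,l}$ as $D^{[n,l+2-i_{j+1}]}_{-{\bf s},e}$, and apply $m_{-{\bf s},e}$. The only cosmetic difference is that the paper fixes the single representative $i'=l+1-i_j$ while you track the whole plateau of $-{\bf s}$; the content is the same.
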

\begin{proof}
By definition, we have  $\mathbb{C}_{\theta_{{\bf s}}} V^{[1^n,i_j]}\simeq D_{{\bf s},e}^{[1^n,i_j]_{\theta_{{\bf s}}}}$ and we can apply the functor $F$ to obtain 
$F(\mathbb{C}_{\theta_{{\bf s}}} V^{[1^n,i_j]})\simeq F(D_{{\bf s},e}^{[1^n,i_j]_{\theta_{{\bf s}}}})$.  
On the one hand, by definition, we have $F(D_{{\bf s},e}^{[1^n,i_j]_{{\theta}_{\bf s}}})={D}_{-{\bf s},e}^{m_{{\bf s},e} ([1^n,i_j]_{{\theta}_{\bf s}})}$ and on the other hand the representation associated to the 
 $\mathbb{C}_{\widetilde{\theta}_{\bf s}} \mathcal{H}$-module $F(\mathbb{C}_{\theta_{{\bf s}}} V^{[1^n,i_j])})$ is given by
 $$
 \widetilde{T}_0^{\widetilde{\theta}_{\bf s}}\mapsto T^{\theta_{\bf s}}_0 \mapsto \eta^{s_{i_j}}=\xi^{-s_{i_j}},\ \widetilde{T}^{\widetilde{\theta}_{\bf s}}_i \mapsto -\eta^{-1} T_i^{\theta_{\bf s}} \mapsto \eta^{-1}=\xi.
$$
this is the module obtained by  specialization through $\widetilde{\theta}_{\bf s}$ of the $\mathcal{H}$-module $V^{[n,l+1-i_j]}$ sending ${T}_0$ to ${q}$ 
 and ${T}_i$ to ${Q}_{l+1-i_j}$  for $i=1,\ldots,n-1$. This is thus $\mathbb{C}_{\widetilde{\theta}_{\bf s}} V^{[n,l+1-i_j]}$. 
  Now, note that we have $-{\bf s}\in \mathcal{A}_{e,l}$. Hence we can use Proposition \ref{triv} which shows that:
  $$\mathbb{C}_{\widetilde{\theta}_{\bf s}}V^{[n,l+1-i_j]}={D}_{-{\bf s},e}^{[n,l+2-i_{j+1}]}.$$
As a conclusion we obtain: 
$$m_{{\bf s},e} ([1^n,i_j]_{\theta_{\bf s}})={[n,l+2-i_{j+1}]}.$$
Applying then $m_{-{\bf s},e}$ leads to the desired result. 
\end{proof}
\begin{abs}\label{exp}
By the result of Fayers, the image $m_{-{\bf s},e}(\ulambda)$ for $\ulambda \in \Phi_{{\bf s},e}(n)$ may be recursively computed. Indeed let us consider the sequence $g_{-{\bf s },e} (\ulambda)$ and  assume that we have:
$$g_{{\bf s },e} (\ulambda)=(k_1+e\mathbb{Z},\ldots, k_n+e\mathbb{Z}).$$
Then $m_{-{\bf s},e}(\ulambda)$  satisfies:
$$g_{{\bf s },e} (m_{-{\bf s},e}(\ulambda))=(-k_1+e\mathbb{Z},\ldots, -k_n+e\mathbb{Z}).$$
By  \S \ref{good}, this uniquely characterized  $m_{-{\bf s},e}(\ulambda)$. 

\end{abs}

\begin{Exa}\label{typA}
Let us study the case of type $A$ that is when $l=1$. In this case, note that $T_0$ is just $Q_1$ so we only have to consider  $\theta : A \to \mathbb{C}$ a specialization such that $\theta (q)=\eta$, a primitive $e$-root of $1$.  We have two $1$-dimensional representations $(n)_{\theta}$ and $(1^n)_{\theta}$ for the specialized Hecke algebra.

By Proposition \ref{triv}, we have $(n)_{\theta_{\bf s}}=(n)$ and  by Proposition \ref{mull}, we have $(1^n)_{\theta_{\bf s}}=m_e ((n))$ where $m_e$ is the usual Mullineux involution. The explicit form of this partition is known (by \cite[Thm 6.22]{James}) and can,  for example, easily been computed following \S \ref{exp}. Take here for example ${\bf s}=(0)$ (it does in fact not depend on the choice of ${\bf s}$ in this case)
$$g_{(0),e} ( (n) )=(e\mathbb{Z},1+e\mathbb{Z},\ldots, n-1+e\mathbb{Z}).$$
Then $m_{e}((n))$  is the unique element such that 
$$g_{(0),e} (m_{e}((n)))=(e\mathbb{Z},-1+e\mathbb{Z}, \ldots, -n+1+e\mathbb{Z}).$$
It is not difficult to see that we obtain the partition $((q+1)^r q^{e-1-r})$ 
where $(q,r)\in \mathbb{Z}_{>0}\times \{0,1,\ldots,e-2\}$ is uniquely defined by the euclidean division $n=q (e-1)+r$.

\end{Exa}

\begin{Exa}
 Assume now that $e=2$. Then we remark that we have for all $j=1,\ldots,k$:
 $$g_{{\bf s},e}([n,i_j])=g_{{\bf s},e}([1^n,i_j]),$$
 and more generally 
$$g_{{\bf s },e} (\ulambda)=g_{{\bf s },e} (m_{-{\bf s},e}(\ulambda)).$$
We thus have $[n,i_j]_{{\theta}_{\bf s}}=[1^n,i_j]_{{\theta}_{\bf s}}$ 
This is consistent with our remark in  \S \ref{pb}. 
\end{Exa}

  \section{Crystal isomorphisms}\label{crystal}
Let ${\bf s}\in \mathbb{Z}^l$ and ${\sigma}\in \widehat{\mathfrak{S}}_l$. Set ${\bf s}':=\sigma.{\bf s}=(s_1',\ldots,s_l')$.   
The aim of this part is to explain how the bijections   
    $$\chi_{e,{\bf s},{\bf s}'}  :  \Phi_{{\bf s},e} (n) \to  \Phi_{{\bf s}',e} (n),$$
  can be combinatorially computed without any reference to sequences of ``good nodes''.  This result essentially comes from \cite{JL} but we need to translate it a little bit for our purpose. This will be helpful in the following. 
  
 \begin{abs}   Recall \S \ref{aff} , if we set $\tau:=y_l \sigma_{l-1}\ldots\sigma_1$, we see that 
    $\widehat{\mathfrak{S}}_l$ is generated by $\tau$ and $\sigma_i$ for $i=1,\ldots,l-1$. In addition, the action of  $\widehat{\mathfrak{S}}_l$
   on $\mathbb{Z}^l$ gives that 
   $$\tau.{\bf s}=(s_2,\ldots,s_{l},s_1+e).$$
  To compute $\chi_{e,{\bf s},{\bf s}'}$ explicitly it suffices to consider the cases where $\sigma=\tau$ and $\sigma=\sigma_i$ for $i=1,\ldots,l$.  This is what we will do in the next subsections.
  \end{abs}
    \begin{abs}\label{algo0}
First,  we describe the case where $\sigma=\tau$. In this case, the bijection 
  $$\chi_{e,{\bf s},{\bf s}'}  :  \Phi_{{\bf s},e} (n) \to  \Phi_{{\bf s}',e} (n),$$
is simply given as follows, for all $\ulambda\in \Phi_{{\bf s},e} (n)$, we have:
  $$\chi_{e,{\bf s},{\bf s}'} (\ulambda)=(\lambda^2,\ldots,\lambda^l,\lambda^1).$$
We refer to \cite[Prop. 5.2.1]{JL} for a proof. 

  \end{abs}

  \begin{abs}\label{algo}
  Let us now describe the case  $\sigma=\sigma_i$ for $i=1,\ldots,l-1$. The proof follows from  the proof of \cite[Prop. 5.2.1]{JL}.  If $\ulambda=(\lambda^1,\ldots,\lambda^l)\in \Phi_{{\bf s},e} (n)$ we have 
      $$\chi_{e,{\bf s},\sigma_i.{\bf s}} (\ulambda)=(\lambda^1,\ldots,\lambda^{i-1},\widetilde{\lambda}^{i+1},\widetilde{\lambda}^{i},\lambda^{i+2},\ldots,\lambda^l),$$
 where we will now describe how one can obtain   $(\widetilde{\lambda}^{i+1},\widetilde{\lambda}^{i})$ from $(\lambda^i,\lambda^{i+1})$.
  This has been  originally given by modifying a combinatorial object associated to the $l$-partition called the  charged symbol.  Here we translate the procedure 
 in terms of Young tableaux. 

We in addition assume that  $s_i>s_{i+1}$. We only need this case in the following (but the result can be adapted to the case $s_i<s_{i+1}$) %Let $(\lambda^i,\lambda^2)$ be a bipartition in $\Phi_{{\bf s},e} (n)$. 
  We assume that $\lambda^i=(\lambda^i_1,\ldots,\lambda^i_{r_i})$ with $r_i\in \mathbb{Z}_{>0}$ and 
  $\lambda^{i+1}=(\lambda^{i+1}_1,\ldots,\lambda^{i+1}_{r_{i+1}})$ with $r_{i+1}\in \mathbb{Z}_{>0}$. We then set 
  $$r:=\text{Max} (r_{i+1}+s_i-s_{i+1},r_i).$$
  Now adding as many zero part as necessary we slightly abuse notation by assuming  that 
 $\lambda^i=(\lambda^i_1,\ldots,\lambda^1_{r})$  and 
  $\lambda^{i+1}=(\lambda^{i+1}_1,\ldots,\lambda^{i+1}_{r -s_i+s_{i+1}})$.
   Moreover, for any part $\mu_a^c$ of an $l$-partition $\umu=(\mu^1,\ldots,\mu^l)$, we set $\beta (\mu_a^c)=\mu_a^c-a+s_c$ (this is called a $\beta$-number). This corresponds to the 
    content of the right most node of $\lambda_a^c$ if it is non zero.

 We will now consider the parts of $\lambda^{i+1}$ indexed decreasingly from $r-s_i+s_{i+1}$ to $1$ and construct the new partitions $\widetilde{\lambda}^i$ and 
 $\widetilde{\lambda}^{i+1}$ 
  step by step:
 \begin{itemize}
 \item Let us consider the part $\lambda^{i+1}_{r-s_i+s_{i+1}}$, we define $a_1\in \{1,\ldots,r\}$ such that:
 $$\beta (\lambda^i_{a_1})=\textrm{Min} (\beta (\lambda^i_{a}),\ a\in \{1,\ldots,r\},\ \beta (\lambda^i_a)\geq \beta (\lambda^{i+1}_{r-s_i+s_{i+1}})).$$
 It can be shown from the fact that $\ulambda$ is in $\Phi_{{\bf s},e} (n)$ that this is well and uniquely defined. Then we set 
 
  $$\widetilde{\lambda}^{i}_k=\left\{
 \begin{array}{ccc}
0& \text{if }k>r-s_i+s_{i+1}, \\
{\lambda}^{i+1}_{r-s_i+s_{i+1}}+\beta (\lambda^i_{a_1})-\beta (\lambda^{i+1}_{r-s_i+s_{i+1}})& \text{if }r-s_i+s_{i+1}.
 \end{array}\right.$$
 
 $$\widetilde{\lambda}^{i+1}_k=\left\{
 \begin{array}{ccc}
 \lambda^i_k& \text{if }k>a_1,\\
 \lambda^i_{a_1}-\beta (\lambda^i_{a_1})+\beta (\lambda^{i+1}_{r-s_i+s_{i+1}})& \text{if }k=a_1.
 \end{array}\right.$$
 \item For $\lambda^{i+1}_{r-s_i+s_{i+1}-1}$, we define $a_2\in \{1,\ldots,r\}$ such that:
 $$\beta (\lambda^i_{a_2})=\textrm{Min} (\beta (\lambda^i_{a}),\ a\in \{1,\ldots,r\},\ \beta (\lambda^i_a)\geq \beta (\lambda^{i+1}_{r-s_i+s_{i+1}-1})).$$
 It can be shown from the fact that $\ulambda$ is in $\Phi_{{\bf s},e} (n)$  that this is well and uniquely defined and we have $a_2<a_1$. Then we set 
 $$\widetilde{\lambda}^i_{r-s_i+s_{i+1}-1}={\lambda}^2_{r-s_i+s_{i+1}-1}+\beta (\lambda^i_{a_2})-\beta (\lambda^{i+1}_{r-s_i+s_{i+1}-1}),$$
 $$\widetilde{\lambda}^{i+1}_k=\left\{
 \begin{array}{ccc}
 \lambda^i_k& \text{if }a_1>k>a_2,\\
 \lambda^i_{a_2}-\beta (\lambda^i_{a_2})+\beta (\lambda^{i+1}_{r-s_i+s_{i+1}-1})& \text{if }k=a_2.
 \end{array}\right.$$
 \item We continue this process until we reach $\lambda^{i+1}_1$. We thus have defined $a_j\in \{1,\ldots,r\}$ for $j=1,\ldots,r-s_i+s_{i+1}$ on the one hand and the parts $\widetilde{\lambda}^i_k$ for $k\geq 1$ and 
 $\widetilde{\lambda}^{i+1}_k$ for $k\geq a_{r-s_i+s_{i+1}}$ on the other hand. We in addition set  $\widetilde{\lambda}^{i+1}_k=\lambda^i_k$ for $1\geq k> a_{r-s_i+s_{i+1}}$. This gives $\widetilde{\lambda}^{i+1}$ and $\widetilde{\lambda}^{i}$.
 \end{itemize}

 Note that, quite remarkably, this procedure does not depend on $e$ ! this has in fact a crystal theory explanation: this process correspond to an isomorphism of crystals between $\mathcal{U} (\widehat{\mathfrak{sl}}_e)$-modules. It can be shown that the associated crystals embed in crystals of $\mathcal{U} ({\mathfrak{sl}}_{\infty})$-modules and that the 
  crystal isomorphism 
  is  nothing but the restriction of a crystal $\mathcal{U} ({\mathfrak{sl}}_{\infty})$-isomorphism.  It thus does not depend on $e$. 
 
 \end{abs}
 
 \begin{abs}
 
 We consider the Young diagram of $\ulambda$ and we color each node of the diagram with its content. We get a colored Young diagram as in the following example. One can visualize the above process in this colored Young diagram. This consist in adding to the Young diagram of  $\lambda^2$  a skew Young diagram to obtain the Young diagram of $\widetilde{\lambda}^1$ and delete it 
  to $\lambda^1$ to obtain $\widetilde{\lambda}^2$. This skew Young diagram is obtained by looking at the $\beta$-numbers we have defined above.

\end{abs}

\begin{Exa}
Take $(s_1,s_2)=(1,0)$ and the $2$-partition $(5.5.3.1,3.1)$. We here have $r_1=4$ and $r_2=2$. Thus $r=4$. 
\begin{itemize}
\item We start with $\lambda^2_3=0$. As $\beta (\lambda^2_3)=-3$ then $a_1=4$ because $\beta (\lambda^1_4)=-2$. 
 Thus we obtain $\widetilde{\lambda}^1_3=1$ and $\widetilde{\lambda}^2_4=0$.
\item For $\lambda^2_2=1$, as $\beta (\lambda^2_2)=-1$ then $a_1=3$ because $\beta (\lambda^1_3)=1$. 
 Thus we obtain $\widetilde{\lambda}^1_2=3$ and $\widetilde{\lambda}^2_3=1$.
\item For $\lambda^2_1=3$, as $\beta (\lambda^2_1)=2$ then $a_1=2$ because $\beta (\lambda^1_2)=4$. 
 Thus we obtain $\widetilde{\lambda}^1_1=5$ and $\widetilde{\lambda}^2_2=3$.
 \end{itemize}
We thus get $\widetilde{\ulambda}=(5.3.1,5.3.1)$. At the level of colored Young diagram, we get for $\ulambda$ the following one:

$$
\left(
\begin{array}{|c|c|c|c|c|}
  \hline
  \ 1 \ &\ 2\  &\ 3\  &   4\  &\ { 5}\    \\
  \hline 
  \  0 \ &\ 1\ &\ 2 \ &\  {\bf 3}\ &\ {\bf 4}\ \\
    \hline
    \  \underline{1}\ & \ {\bf 0}\ & \ {\bf 1}\   \\
\cline{1-3}
\ \underline{{\bf 2}}\ \\
\cline{1-1}
\end{array}\;,\;
\begin{array}{|c|c|c|}
  \hline
  \ 0 \ &\ 1\  &\ 2\     \\
  \hline
 \ \underline{1}\   \\
 \cline{1-1}
\end{array}
\right)$$
and we obtain for ${\widetilde{\lambda}}$:

$$
\left(
\begin{array}{|c|c|c|c|c|}
  \hline
  \ 0 \ &\ 1\  &\ 2\  &   {\bf 3}\  &\ {\bf 4}\ \\
  \hline 
    \  \underline{1}\ & \ {\bf 0}\ & \ {\bf 1}\   \\
\cline{1-3}
\ \underline{{\bf 2}}\ \\
\cline{1-1}
\end{array}\;,\;
\begin{array}{|c|c|c|c|c|}
\hline
  \  1 \ &\ 2\ &\ 3 \ &\  { 4}\ &\ {5}\ \\
    \hline
  \ 0 \ &\ 1\  &\ 2\     \\
  \cline{1-3}
 \ \underline{1}\   \\
 \cline{1-1}
\end{array}
\right)$$
where $\underline{a}$ for $a\in \mathbb{Z}_{\geq 0}$ means $-a$.

  \end{Exa}

\section{Labeling of one dimensional representations}

 Our goal is to develop the strategy of \S\ref{strate} to obtain an explicit description of our one dimensional representations.  So we assume that 
  ${\bf s}\in \mathbb{Z}^l$ and that $\ulambda \in \Lambda (n)$. If $\umu=(\mu^{1},\ldots,\mu^l)$ and 
   $\unu=(\nu^1,\ldots,\nu^l)$, we denote by $\umu+\unu$ the $l$-partition $(\mu^1+\nu^1,\ldots,\mu^l+\nu^l)$. Here, 
 the sum of two partitions $\mu=(\mu_1,\ldots,\mu_r)$ and $\nu=(\nu_1,\ldots,\nu_{r})$ is defined as $(\nu_1+\mu_1,\ldots,\nu_r+\mu_r)$ (where we add as many zero parts to the partitions $\mu$ or $\nu$ if necessary). 

\begin{abs}\label{prem}
Let ${\bf s}\in \mathbb{Z}^l$ and define  the set:
$$\mathfrak{I}:=\{ i\in \{0,1,\ldots,e-1\}\ |\ \exists j\in \{1,\ldots,l\},\ s_j\equiv i+e\mathbb{Z}\}.$$
As in  our discussion in \S\ref{disc1}, note that we have 
$$[n,k_1]_{\theta_{\bf s}}=[n,k_2]_{\theta_{\bf s}}\qquad \text{and} \qquad 
[1^n,k_1]_{\theta_{\bf s}}=[1^n,k_2]_{\theta_{\bf s}},$$
as soon as $s_{k_1}=s_{k_2}+e\mathbb{Z}$. For each $k\in \mathfrak{I}$, we define $\alpha (k)$ to be the minimal element of $\{1,\ldots,l\}$ such that 
$$s_{\alpha (k)}=\textrm{max} \{ s_j\ |\ j\in \{1,\ldots,l\},\ s_j    \equiv k+e\mathbb{Z}\}.$$
We thus have $s_{\alpha (k)}+e\mathbb{Z}=k+e\mathbb{Z}$.  
The question remains to determine $[n,\alpha (k)]$ and $[1^n,\alpha (k)]$ for all $k\in \mathfrak{I}$.

Following our strategy exposed in \S \ref{strate}, there exists  $\sigma \in \widehat{\mathfrak{S}}_l$
   such that $\sigma.{\bf s}'= {\bf s}$ with ${\bf s}' \in \mathcal{A}_{e,l}$.  We fix $k\in \mathfrak{I}$, then we obtain: 
   \begin{itemize}
   \item For $\ulambda=[n,\alpha (k) ]_{\theta_{\bf s}}$ we have by \S\ref{flows}, that    
    $$g_{{\bf s}',e} ((\sigma_0^{-1} \ulambda)_{{\theta}_{{\bf s}'}})=(k+e\mathbb{Z},k+1+e\mathbb{Z},\ldots, k+n-1+e\mathbb{Z}),$$
   and thus
     $$g_{{\bf s},e} (( \ulambda)_{{\theta}_{{\bf s}}})=(k+e\mathbb{Z},k+1+e\mathbb{Z},\ldots, k+n-1+e\mathbb{Z}).$$
    \item  For $\ulambda=[1,\alpha (k) ]_{\theta_{\bf s}}$ we have by Proposition \ref{mull} and \S\ref{exp}, 
       $$g_{{\bf s},e} (( \ulambda)_{{\theta}_{{\bf s}}})=(k+e\mathbb{Z},k-1+e\mathbb{Z},\ldots, k-n+1+e\mathbb{Z}).$$
     
   \end{itemize}
   
   We can now first  study more precisely the multipartitions $[n,\alpha (k) ]_{\theta_{\bf s}}$, we then turn to the multipartitions 
    $[1^n,\alpha (k) ]_{\theta_{\bf s}}$ for $k\in \mathfrak{I}$. 
  
\end{abs}

 \begin{Prop}\label{first} Under the above notations,  we have the following result. 
 \begin{enumerate}
 \item If for all $k_1\in \mathfrak{I}$ we have:
 $$s_{\alpha (k)}>s_{\alpha (k_1)} \text { or  } ( s_{\alpha (k_1)}> s_{\alpha (k)}>s_{\alpha (k_1)}-e \textrm{ and } \alpha (k)<\alpha (k_1)).$$
 Then we have   $[n,\alpha (k)]_{\theta_{\bf s}}=[n,\alpha (k)]$.
\item Otherwise,  let ${k_1}\in \mathfrak{I}$ be such that: 
\begin{itemize}
\item $s_{\alpha (k_1)}>s_{\alpha (k)}+e$ or  ($s_{\alpha (k_1)}>s_{\alpha (k)}>s_{\alpha (k_1)}-e$ and $\alpha (k_1)<\alpha (k)$.)
\item The integer  $j_{k_1}\in \{1,\ldots e-1\}$ such that $j_{k_1} \equiv k_1-k+e\mathbb{Z}$ is minimal with the above property. 
\end{itemize}
We then have:
$$[n,\alpha (k)]_{\theta_{\bf s}}=
\left\{ 
\text{ \begin{tabular}{cc}
$[n,\alpha (k)]$ & { if } $n\leq j_{k_1}$, \\
$[j_{k_1},\alpha (k)]+[n-j_{k_1},\alpha (k_1)]_{\theta_{\bf s}}$ &{ otherwise.}
\end{tabular}}\right.$$
 \end{enumerate} 
 \end{Prop}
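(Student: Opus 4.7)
The plan is to reconstruct $[n,\alpha(k)]_{\theta_{\bf s}}$ step by step from its good-node sequence, which by \S\ref{prem} is
$$g_{{\bf s},e}([n,\alpha(k)]_{\theta_{\bf s}})=(k+e\mathbb{Z},\,k+1+e\mathbb{Z},\,\ldots,\,k+n-1+e\mathbb{Z}).$$
Starting from $\uemptyset$ I will add, at each step $t=1,\ldots,n$, the unique good addable $(k+t-1)$-node, and verify by induction on $n$ that the result matches the proposition.

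The core is a $\beta$-number comparison at each step. Suppose at step $t$ the intermediate multipartition has $\lambda^{\alpha(k)}=(t-1)$ with all other components empty. Then the addable $(k+t-1)$-nodes are $(1,t,\alpha(k))$ with $\beta$-number $s_{\alpha(k)}+t-1$, together with $(1,1,c)$ for every $c$ satisfying $s_c\equiv k+t-1\pmod e$; among these latter, the dominant one under $\prec_{{\bf s},e}$ is $(1,1,\alpha(k'))$, where $k':=(k+t-1)\bmod e$, directly from the definition of $\alpha$. In Case 1, the hypothesis applied with $k_1=k'$ gives either $s_{\alpha(k)}>s_{\alpha(k')}$ (so $(1,t,\alpha(k))$ wins on strict $\beta$-inequality, using $t\geq 1$), or $0<s_{\alpha(k')}-s_{\alpha(k)}=:j<e$ with $\alpha(k)<\alpha(k')$ (so, since $t-1\equiv j\pmod e$ and $t-1\geq 0$ forces $t-1\geq j$, one has either strict $\beta$-inequality or equality with component tiebreaker $\alpha(k)<\alpha(k')$). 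In both cases $(1,t,\alpha(k))$ is the good node, so in Case 1 every step places a node in position $\alpha(k)$, yielding $[n,\alpha(k)]$.

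For Case 2, the same analysis applies as long as the residue class reached satisfies the Case 1 condition. By minimality of $j_{k_1}$, this holds for steps $1,\ldots,j_{k_1}$; hence the first $\min(n,j_{k_1})$ nodes populate $\lambda^{\alpha(k)}$, settling the sub-case $n\leq j_{k_1}$. For $n>j_{k_1}$, at step $j_{k_1}+1$ (residue $k_1$) the Case 2 inequality flips the comparison: either $s_{\alpha(k_1)}-s_{\alpha(k)}>e$, which forces $s_{\alpha(k_1)}\geq s_{\alpha(k)}+e+j_{k_1}>s_{\alpha(k)}+j_{k_1}$, or else $s_{\alpha(k_1)}-s_{\alpha(k)}=j_{k_1}$ with $\alpha(k_1)<\alpha(k)$. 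Either way $(1,1,\alpha(k_1))$ defeats $(1,j_{k_1}+1,\alpha(k))$ and becomes the good node. The remaining residue sequence $(k_1,k_1+1,\ldots,k_1+n-j_{k_1}-1)$ is exactly the good-node sequence of $[n-j_{k_1},\alpha(k_1)]_{\theta_{\bf s}}$, so by the inductive hypothesis on $n$ the remaining $n-j_{k_1}$ steps should reproduce its construction in the components other than $\alpha(k)$, giving the claimed sum.

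The main obstacle is verifying that the \emph{baggage} $[j_{k_1},\alpha(k)]$ never re-enters the construction: the node $(1,j_{k_1}+1,\alpha(k))$ stays addable (with residue $k_1$), so I must show that it loses at every later $k_1$-step. Such a step has the form $t=j_{k_1}+1+me$ with $m\geq 1$, and at the corresponding internal step $me+1$ of the standalone construction for $[n-j_{k_1},\alpha(k_1)]_{\theta_{\bf s}}$, the selected good $k_1$-node has $\beta$-value at least $s_{\alpha(k_1)}+me$; this lower bound propagates through the Case 1/Case 2 recursion. Since $s_{\alpha(k_1)}\geq s_{\alpha(k)}+j_{k_1}$ by the Case 2 hypothesis, we obtain $s_{\alpha(k_1)}+me>s_{\alpha(k)}+j_{k_1}$ for $m\geq 1$, so the baggage node is consistently dominated. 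A similar (easier) check rules out the only other addable node in $\lambda^{\alpha(k)}$, namely $(2,1,\alpha(k))$ of residue $k-1\bmod e$, whose $\beta$-value $s_{\alpha(k)}-1$ is too small to compete. This closes the induction and completes the argument.
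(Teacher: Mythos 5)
Your overall strategy is the same as the paper's: reconstruct $[n,\alpha(k)]_{\theta_{\bf s}}$ step by step from its good-node sequence and compare $\beta$-numbers. Case 1 and the first $j_{k_1}$ steps of Case 2 are correct and essentially identical to the paper's treatment. The gap is in the inductive step of Case 2. The row $(j_{k_1})$ left behind in component $\alpha(k)$ contributes not only the two addable nodes you analyse, but also the \emph{removable} node $(1,j_{k_1},\alpha(k))$, of residue $k+j_{k_1}-1\equiv k_1-1\pmod{e}$. You never mention it, yet it is the node the paper's proof is organised around: a removable node cannot be dismissed by showing anything is ``dominated'', because it enters the good-node computation through the $RA$-reduction of \S\ref{good} --- an extra $R$ in the $(k_1-1)$-word can absorb an addable node and thereby change which $A$ survives as the rightmost letter of the reduced word $A^pR^q$. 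The paper neutralises it by observing that, as soon as $(1,1,\alpha(k_1))$ is added, the addable node $(2,1,\alpha(k_1))$ appears with the same residue $k_1-1$, sits immediately above $(1,j_{k_1},\alpha(k))$ for $\prec_{{\bf s},e}$, and the two form an adjacent $RA$ factor that is deleted at every subsequent reduction, so neither ever influences a good-node choice. Some argument of this kind is indispensable and is absent from your proof.

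Relatedly, the principle underlying your ``baggage'' analysis --- that an addable node which is $\prec$-dominated by the intended good node is automatically harmless --- is not valid once removable nodes of the same residue are interleaved: the good addable $i$-node is the rightmost $A$ of the \emph{reduced} word, not the $\prec$-maximal addable $i$-node. For instance the word $A^{*}RRA_1A_2$ reduces to $A^{*}$, while inserting an extra addable node between the two $R$'s gives $A^{*}RA_0RA_1A_2$, which reduces to $A^{*}A_2$ and changes the good node. Your lower bound $s_{\alpha(k_1)}+me$ for the good node at the later $k_1$-steps is correct and does propagate through the recursion as you claim, but to conclude you must also control the removable nodes of the relevant residues at those steps (or argue, as the paper does, by exhibiting cancelling $RA$ pairs). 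With that repaired, your induction closes in the same way as the paper's.
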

%\begin{Rem}
%In other words, if $n\leq j_{k_1}$ then $[n,\alpha (k)]_{\theta_{\bf s}}=[n,\alpha (k)]$ and otherwise we have 
% $\mu^{\alpha (k)}=j$ and we come back to the above procedure by replacing $k$ with $k_1$ and $n$ with $n-j_{i_1}$. 
 %\end{Rem}

\begin{proof}

We have to consider the sequence $(k+e\mathbb{Z},k+1+e\mathbb{Z},\ldots, k+n-1+e\mathbb{Z})$ and add successively good nodes with appropriate residues, starting with the empty multipartition. 

\begin{enumerate}

\item Assume that we are in the first case of the theorem.  Assume moreover that   $j\in \mathfrak{I}$  is such that there exists  $t\in \{1,\ldots, n-1\}$ satisfying:
 $$k+t+e\mathbb{Z}=j+e\mathbb{Z}.$$
 Then, in the $l$-partition $[t,\alpha (k)]$,   we have: 
 $$ (1,1,\alpha (j)) \prec_{{\bf s},e}  (1,t+1,\alpha (k)),$$ 
 because of our hypothesis and because of the definition of  $\prec_{{\bf s},e}$.
Thus, for this $l$-partition,  the good $k+t+e\mathbb{Z}$-node is $(1,t+1,\alpha (k))$.  The result follows. 

\item Consider now the second case. If $n\leq j_{k_1}$ then it is clear that the good nodes are successively  added  in the component $\alpha (k)$ of the $l$-partition and thus we have $[n,\alpha (k)]_{\theta_{\bf s}}=[n,\alpha (k)]$. Otherwise, considering the sequence of residue:
 $$( k+e\mathbb{Z},k+1+e\mathbb{Z},\ldots, k+j_{k_1}-1+e\mathbb{Z}).$$
The  $l$-partition which is obtained by adding the good nodes associated to this sequence is 
 $[j_{k_1},\alpha (k)]$. Now note that the removable node $(1,j_{k_1},\alpha (k))$ comes with a residue:
 $$k+j_{k_1}-1+e\mathbb{Z}.$$
If we now want to add a good addable node with residue $k+j_{k_1}+e\mathbb{Z}$, we have to add it in the component $\alpha (k_1)$ by hypothesis. Note then that the node $(2,1,\alpha (k_1))$ becomes an addable node with residue:
 $$1-2+k_1+e\mathbb{Z}=k+j_{k_1}-1+e\mathbb{Z}.$$
This node is greater than $(1,j_{k_1},\alpha (k))$ and there are no (and there will be no) addable or removable nodes with the same residue between these two nodes. 
 Thus in the process of computing good nodes, these two nodes have no impact in the following (because it is associated to an occurrence  $RA$ in the notation of \S\ref{good}). We now have to consider the sequence: 
 $$(k+j_k+e\mathbb{Z},k+j_k+1+e\mathbb{Z},\ldots,k+n-1+e\mathbb{Z}).$$
 and add good nodes with the appropriate residues to our multipartition $[j_{k_1},\alpha (k)]$. Note that in this process, we will never add new nodes in the component $\alpha (k)$ of our $l$-partition. 
 We can then conclude by induction. 

\end{enumerate}

\end{proof}

\begin{Exa}\label{exa1} Set $e=4$ and ${\bf s}=(3,0,7,3)$ then we have $\mathfrak{I}=\{0,3\}$ and we have $\alpha (0)=2$ and $\alpha  (3)=3$. Following the above theorem we thus have 
$$[n,2]_{\theta_{\bf s}}=\left\{ \begin{array}{ccc}
 (\emptyset,n,\emptyset,\emptyset) & \text{if} & n\leq 3, \\
 (\emptyset,3,n-3,\emptyset) & \text{otherwise.} &
\end{array}\right.$$
and combining Proposition \ref{triv} with the above theorem gives
$$[n,3]_{\theta_{\bf s}}=[n,1]_{\theta_{\bf s}}=[n,4]_{\theta_{\bf s}}= (\emptyset,\emptyset,n,\emptyset).$$
\end{Exa}
We now turn to the representations of type $[1^n,i]_{\theta_{\bf s}}$. 
\begin{abs}\label{part} In general, the result is more complicated to describe than for the other  type of $1$-dimensional representations and it requires to much notations for
  being stated and useful in the general setting. Instead, we will use the following idea. Let $k_0,\ldots,k_s$ be the element in $\mathfrak{I}$ such that $\alpha (k_0)<\alpha (k_1)<\ldots<\alpha (k_s)$. We here make the assumption that we have:
  $$s_{\alpha (k_0)}>s_{\alpha (k_1)}>\ldots >s_{\alpha (k_s)}.$$
  %and that $s_i+e\mathbb{Z}=s_{\alpha (k_j)}+e\mathbb{Z}$ implies $\alpha (k_j)<i$ for all $j=0,\ldots,s$. 
We will explain later how we can deduce the general result from this case. 
Let $i\in \{0,1,\ldots,s\}$, we want to find 
   $[1^n,\alpha (k_i)]_{\theta_{\bf s}}$. 
   \end{abs}
  \begin{Prop}\label{resup}
  Under the above hypotheses, we have 
  $$[1^n,\alpha (k_0)]_{\theta_{\bf s}}=[ \left( (q+1)^r,q^{e-1-r} \right)   ,\alpha (k)],$$
   where $(q,r)\in \mathbb{Z}_{>0}\times \{0,1,\ldots,e-2\}$ is uniquely defined by the euclidean division $n=q (e-1)+r$. If $i\neq 0$, we consider $j\in \{0,\ldots,i-1\}$ such that $x\in \{1,\ldots,e-1\}$ is minimal such that $x+e\mathbb{Z}=k_i-k_j+e\mathbb{Z}$. 
 Then we have: 
  $$[1^n,\alpha (k_i)]_{\theta_{\bf s}}=
\left\{ 
\text{ \begin{tabular}{cc}
$[1^n,\alpha (k_i)]$ & { if } $n\leq x$, \\
$[1^x,\alpha (k_i)]+[1^{n-x},\alpha (k_j)]_{\theta_{\bf s}}$ &{ otherwise.}
\end{tabular}}\right.$$
  
  \end{Prop}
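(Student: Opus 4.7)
My plan is to mirror the strategy used for Proposition \ref{first}. By \S\ref{prem}, the multipartition $[1^n, \alpha(k_i)]_{\theta_{\bf s}}$ is uniquely determined by its sequence of good residues
$$g_{{\bf s},e}([1^n, \alpha(k_i)]_{\theta_{\bf s}}) = (k_i + e\mathbb{Z},\, k_i - 1 + e\mathbb{Z},\, \ldots,\, k_i - n + 1 + e\mathbb{Z}),$$
so it suffices to start from $\uemptyset$ and, at each step, identify the good addable node of the prescribed residue by inspecting the pre-order $\prec_{{\bf s},e}$.

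For Part 1, the assumption $s_{\alpha(k_0)} > s_{\alpha(k_m)}$ for all $m > 0$ forces, at every step, the rightmost addable node in the reduced word to lie inside component $\alpha(k_0)$. The whole computation therefore takes place in this single component and reduces to the $l = 1$ situation of Example \ref{typA}, yielding the Mullineux image $((q+1)^r, q^{e-1-r})$ of $(n)$.

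For Part 2 with $n \leq x$, the minimality of $x$ prevents the residues $k_i, k_i - 1, \ldots, k_i - n + 1$ from meeting any $k_m$ with $m < i$, so the good node is systematically added at the bottom of the first column of $\alpha(k_i)$ and we obtain $[1^n, \alpha(k_i)]$. For $n > x$, I would split the process in two: the first $x$ steps produce the column $(1^x)$ in $\alpha(k_i)$ exactly as before; at step $x+1$ the required residue becomes $k_j$, and a direct content comparison shows that $(1, 1, \alpha(k_j))$ (of content $s_{\alpha(k_j)}$) beats $(x+1, 1, \alpha(k_i))$ (of content $s_{\alpha(k_i)} - x$) because $s_{\alpha(k_j)} > s_{\alpha(k_i)}$, while the minimality of $x$ rules out any other component $\alpha(k_m)$ yielding a preferred candidate. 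The remaining $n - x$ residues then coincide with $g_{{\bf s},e}([1^{n-x}, \alpha(k_j)]_{\theta_{\bf s}})$; by induction on $i$ we may assume the formula holds for that smaller case, and since its recursive shape lives only on components $\alpha(k_m)$ with $m \leq j < i$, the componentwise sum with $[1^x, \alpha(k_i)]$ yields the right-hand side of the proposition.

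The most delicate point, which I expect to require the greatest care, is to verify that the column $(1^x)$ sitting in $\alpha(k_i)$ does not perturb the good-node choices during the remaining $n - x$ steps. I would check, residue by residue, that the extra addable or removable nodes created by this column in $\alpha(k_i)$ (typically $(x+1, 1, \alpha(k_i))$, $(1, 2, \alpha(k_i))$ and $(x, 1, \alpha(k_i))$) either carry the wrong residue, or are dominated in $\prec_{{\bf s},e}$ by the nodes in the lower components $\alpha(k_m)$ with $m \leq j$ that are used by the inductive construction of $[1^{n-x}, \alpha(k_j)]_{\theta_{\bf s}}$, so that any induced $RA$ cancellations in $\widetilde w_i$ never change the rightmost $A$. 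Once this non-interference is confirmed, both constructions yield the same $g$-sequence and the proposition follows.
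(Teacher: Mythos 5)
Your proposal follows essentially the same route as the paper: for $i=0$ the computation collapses into the single component $\alpha(k_0)$ and reduces to the $l=1$ Mullineux image of $(n)$ from Example \ref{typA}, and for $i\neq 0$ one builds the column $(1^x)$ in component $\alpha(k_i)$, switches to $\alpha(k_j)$ at step $x+1$, and concludes by induction. The ``delicate point'' you isolate is exactly the one the paper addresses, and your resolution is the paper's: the leftover removable node $(x,1,\alpha(k_i))$ is paired with the newly created addable node of the same residue in component $\alpha(k_j)$, producing an $RA$ cancellation that never disturbs the choice of good nodes.
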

 \begin{proof}
In the case where $i=0$, the problem reduces to the case where  $l=1$. Indeed, the sequence of residues we have to consider is 
$$(k_0+e\mathbb{Z},k_0-1+e\mathbb{Z},\ldots, k_0-n+1+e\mathbb{Z}).$$
Because of our assumption $s_{\alpha (k_0)}>s_{\alpha (k_1)}>\ldots >s_{\alpha (k_s)}$, we see that the good nodes we have to add are always located  in the component $\alpha (k_0)$ of our $l$-partition because the greatest addable nodes with appropriate residues are always in this component. The result follows in this case.

Assume now that $i\neq 0$. Consider 
 $j\in \{0,\ldots,i-1\}$  such that $x\in \{1,\ldots,e-1\}$ is minimal such that $x+e\mathbb{Z}=k_i-k_j+e\mathbb{Z}$. 
 Consider our sequence of residues:
  $$(k_i+e\mathbb{Z},k_i-1+e\mathbb{Z},\ldots, k_i-n+1+e\mathbb{Z}).$$
 For the sequence 
 $$(k_i+e\mathbb{Z},k_i-1+e\mathbb{Z},\ldots, k_i-x+1+e\mathbb{Z}),$$
 the addable good node appears  in the component $\alpha (k_i)$ of the multipartition. 
  We thus obtain the $l$-partition $[ 1^{x}  ,\alpha (k_i)]$. Now we must note that we have a removable  node $R$
  with residue $k_i-x+1+e\mathbb{Z}$ on the component $\alpha (k_i)$. To continue our process, we need to add a good node with residue $k_i-x+e\mathbb{Z}=k_j+e\mathbb{Z}$ and thus consider the sequence 
  $$(k_j+e\mathbb{Z},k_j-1+e\mathbb{Z},\ldots, k_j-n+1-x+e\mathbb{Z}).$$
To conclude by induction, we have to check that our removable  node $R$ does not change the possible good nodes. But as we add the good $k_j+e\mathbb{Z}$-node in the component $\alpha (k_j)$, we get an addable 
 $k_i-x+1+e\mathbb{Z}$-node in the component  $\alpha (k_j)$ which is greater than $R$ and there will be no addable or removable node with the same residue between these two in the following. We thus obtain an occurrence $RA$ in the process of computing the good nodes. This thus doesn't interfere in the process. We can thus conclude by induction  as in the proof of the last proposition. 
 \end{proof}
 
 \begin{abs}\label{fin}
Now, how can we deduce the result for the general case ? to do that we can use the description of the ``crystal isomorphism'' given in section \ref{crystal}. Assume that ${\bf s}\in \mathbb{Z}^l$. Then there exists $\sigma\in \mathfrak{S}_l$ such that ${\bf s}'=\sigma. {\bf s}$ satisfies the condition in \S \ref{part}.  We can thus use Proposition \ref{resup} to obtain 
     $[1^n,\alpha (k_i)]_{{\theta_{{\bf s}'}}}$ for all $0\leq i\leq s$ and then conclude using \S \ref{algo}.
    
    \end{abs}
    \begin{Exa}
    Set $e=4$ and ${\bf s}=(3,0,7,3)$. Assume that $n>1$  then we have $\mathfrak{I}=\{0,3\}$ and we have $\alpha (0)=2$ and $\alpha  (3)=3$
     and $s_2=0$ and $s_3=7$.
    Let us consider ${\bf s} ' =(7,0,3,3)$. Then we are in position to apply Proposition \ref{resup} to $\theta_{{\bf s}'}$
    $$(1^n,\emptyset, \emptyset,\emptyset)_{{\theta_{\bf s}}'}=   ( (q+1)^r.q^{e-1-r}, \emptyset,\emptyset,\emptyset ),$$
    where  $(q,r)\in \mathbb{Z}_{>0}\times \{0,1,\ldots,e-2\}$ is uniquely defined by the euclidean division $n=q (e-1)+r$. 
    Now we have:
        $$(\emptyset, 1^n,\emptyset,\emptyset)_{{\theta_{\bf s}}'}=   ( (q'+1)^{r'}.{q'}^{e-1-{r'}}, 1,\emptyset,\emptyset ),$$
        where  $(q',r')\in \mathbb{Z}_{>0}\times \{0,1,\ldots,e-2\}$ is uniquely defined by the euclidean division $n-1=q' (e-1)+r'$.
        To conclude now, we have to consider the isomorphism $\chi_{e,{\bf s},{\bf s}'}$ which we have described in \S\ref{algo}. Applying it to 
          our two $4$-partitions will give what we wanted that is 
        $(1^n,\emptyset, \emptyset,\emptyset)_{{\theta_{\bf s}}}=(\emptyset, \emptyset,1^n,\emptyset)_{{\theta_{\bf s}}}=(\emptyset, \emptyset,\emptyset,1^n)_{{\theta_{\bf s}}}$ and $(\emptyset, 1^n,\emptyset,\emptyset)_{{\theta_{\bf s}}'}$. 
     Note that  in our case, we can take  $\sigma=\sigma_1\sigma_2\sigma_1$.

    \end{Exa}
    
 \section{Application to type $B_n$}
 
 In this part, we apply the above result to the case of Hecke algebra of type $B_n$ that is the case $l=2$ where we give closed formulae for each case. So we assume that $(s_1,s_2)\in \mathbb{Z}^2$ and we will study three different cases.
 
 \begin{abs}
 First let us assume that $\underline{s_1=s_2}$. Then by Proposition \ref{triv}, we obtain:
 $$((n),\emptyset)_{\theta_{\bf s}}=(\emptyset,(n))_{\theta_{\bf s}}=((n),\emptyset),$$
 and from Proposition \ref{resup}, we obtain:
 $$((1^n),\emptyset)_{\theta_{\bf s}}=(\emptyset,(1^n))_{\theta_{\bf s}}=(m_e (1^n),\emptyset),$$ 
 where $m_e$ is the Mullineux map in type $A$ given in Example \ref{typA}.
 \end{abs}
\begin{abs}
Now let us assume that $\underline{s_1>s_2}$. Then by Proposition \ref{first}, we obtain:
 $$((n),\emptyset)_{\theta_{\bf s}}=((n),\emptyset),$$
and if we set $j\in \{0,\dots,e-1\}$ such that $j+e\mathbb{Z}=s_1-s_2+e\mathbb{Z}$.
 $$(\emptyset,(n))_{\theta_{\bf s}}=
 \left\{
 \begin{array}{ccc}
 (\emptyset,(n)) & \text{ if } j\leq n, \\
 ((n-j),(j)) & \text{ otherwise.}
 \end{array}\right.
 $$
 By Proposition \ref{resup}, we obtain:
  $$((1^n),\emptyset)_{\theta_{\bf s}}=(m_e(1^n),\emptyset),$$
 again where $m_e$ is the Mullineux map in type $A$ 
 and if we set $j\in \{0,\dots,e-1\}$ such that $j+e\mathbb{Z}=s_2-s_1+e\mathbb{Z}$.
 $$(\emptyset,(1^n))_{\theta_{\bf s}}=
 \left\{
 \begin{array}{ccc}
 (\emptyset,(1^n)) & \text{ if } j\leq n, \\
 (m_e (1^{n-j}),(1^j)) & \text{ otherwise.}
 \end{array}\right.
 $$

\end{abs}
\begin{abs}
Finally let us study the case where $\underline{s_2>s_1}$. 
Then by Proposition \ref{first}, we obtain:
 $$(\emptyset, (n))_{\theta_{\bf s}}=(\emptyset,(n)),$$
and if we set $j\in \{0,\dots,e-1\}$ such that $j+e\mathbb{Z}=s_2-s_1+e\mathbb{Z}$.
 $$((n),\emptyset)_{\theta_{\bf s}}=
 \left\{
 \begin{array}{ccc}
 ((n),\emptyset) & \text{ if } j\leq n, \\
 ((j), (n-j)) & \text{ otherwise.}
 \end{array}\right.
 $$
Now let us consider the last two types of representations that we want to characterize. For this, we can  directly use \S \ref{fin}  or argue as follows: 
\begin{itemize}
\item Assume first that we have $s_2\geq s_1+e$.  Then we will use the bijection 
 described in \S \ref{algo0}. 
We have that  ${\bf s}'=(s_2,s_1+e)$ satisfies $s_2\geq s_1+e$ so we can use the case above  to deduce that:
  $$(\emptyset,(1^n))_{\theta_{{\bf s}'}}=(m_e(1^n),\emptyset),$$
 again where $m_e$ is the Mullineux map in type $A$. We thus obtain:
 $$(\emptyset,(1^n))_{\theta_{{\bf s}}}=(\emptyset,m_e(1^n)).$$
Now,  if we set $j\in \{0,\dots,e-1\}$ such that $j+e\mathbb{Z}=s_1+e-s_2+e\mathbb{Z}$, we obtain:
 $$(\emptyset, (1^n) )_{\theta_{{\bf s}'}}=
 \left\{
 \begin{array}{ccc}
 (\emptyset,(1^n)) & \text{ if } j\leq n, \\
 ((m_e (1^{n-j}),(1^j)) & \text{ otherwise.}
 \end{array}\right.
 $$
and by \S \ref{algo0} we conclude:
 $$((1^n),\emptyset )_{\theta_{\bf s}}=
 \left\{
 \begin{array}{ccc}
 ((1^n),\emptyset) & \text{ if } j\leq n, \\
 ((1^j),m_e (1^{n-j})) & \text{ otherwise.}
 \end{array}\right.
 $$
\item Assume finally that $s_1+e>s_2>s_1$. We want to find  $(\emptyset,(1^n))_{\theta_{\bf s}}$.  To do this, we just have to apply the strategy proposed  in \S \ref{fin}  and consider $(1^n,\emptyset)_{\theta_{(s_2,s_1)}}$. We need thus to apply our algorithm to 
$(m_e (1^n),\emptyset)$.  After a quick calculation, we see that: 

$$(\emptyset,(1^n))_{\theta_{\bf s}}=\left\{ \begin{array}{ccc}
((q+1)^{r-(s_2-s_1)}q^{e-1-r} , (q+1)^{s_2-s_1}) & \text{ if } & s_2-s_1\leq r,\\
(q^{e-1-s_2+s_1} , (q+1)^{r}q^{s_2-s_1-r}) & \text{ otherwise.} & \\
\end{array}\right.$$
where $n=q(e-1)+r$ for $q\in \mathbb{Z}_{>0}$ and $0\leq r\leq e-1$. 

Now for the case $(1^n,\emptyset)_{\theta_{\bf s}}$, we again use the bijection of \S \ref{algo0}.  Let us consider 
$(\emptyset,1^n)_{\theta_{(s_2,s_1+e)}}$. We have $s_2+e> s_1+e>s_2$ so by the above result we have: 
$$(\emptyset,(1^n))_{\theta_{(s_2,s_1+e)}}=\left\{ \begin{array}{ccc}
((q+1)^{r-(s_1+e-s_2)}q^{e-1-r} , (q+1)^{s_1+e-s_2}) & \text{ if } & s_1+e-s_2\leq r,\\
(q^{e-1-s_1-e+s_2} , (q+1)^{r}q^{s_1+e-s_2-r}) & \text{ otherwise.} & \\
\end{array}\right.$$
and we can thus conclude  that 
$$((1^n),\emptyset)_{\theta_{(s_1,s_2)}}=\left\{ \begin{array}{ccc}
( (q+1)^{s_1+e-s_2},(q+1)^{r-(s_1+e-s_2)}q^{e-1-r} ) & \text{ if } & s_1+e-s_2\leq r,\\
( (q+1)^{r}q^{s_1+e-s_2-r},q^{s_2-1-s_1} ) & \text{ otherwise.} & \\
\end{array}\right.$$

\end{itemize}

\end{abs}
%We resume this result in the following tabular:
%
%
%\centerline{
%\begin{tabular}{|c||c|c|c|c|}
%\hline 
% \backslashbox{$(s_1,s_2)\in \mathbb{Z}^2$}{$\Lambda_{{\bf s},e} (n)$} & $((n),\emptyset)_{\theta_{\bf s}}$ & $((n),\emptyset)_{\theta_{\bf s}}$ & $((1^n),\emptyset)_{\theta_{\bf s}} $& $(\emptyset,(1^n))_{\theta_{\bf s}}$ \\
%\hline 
%$s_1=s_2$ &   $((n),\emptyset)$     &    $((n),\emptyset)$   &    $\left(((q+1)^r q^{e-1-r}),\emptyset\right)$   &    $\left(((q+1)^r q^{e-1-r}),\emptyset\right)$    \\
%\hline 
%$s_1>s_2 $  &   &   &   &   \\
%\hline 
%$s_1+e <s_2$  &  &   &   & \\
%\hline 
%$s_1<s_2\leq s_1+e$ &   &   & $ \begin{array}{cc}
%( (q+1)^{s_1+e-s_2},(q+1)^{r-(s_1+e-s_2)}q^{e-1-r} ) & \text{ if }  s_1+e-s_2\leq r\\
%( (q+1)^{r}q^{s_1+e-s_2-r},q^{s_2-1-s_1} ) & \text{ otherwise}  \\
%\end{array} $  &  \\
%\hline 
%\end{tabular}}
%
%

\begin{Rem}
The explicit determination of the parameters $(s_1,s_2)$ associated to a weight function on a   Hecke algebra of type $B_n$ is explained in \cite[\S 6.7.5]{GJ}.

\end{Rem}

\vspace{0.5cm}
\noindent {\bf Address:}\\

\noindent \textsc{Nicolas Jacon}, Universit\'e de Reims Champagne-Ardenne, UFR Sciences exactes et naturelles, Laboratoire de Math\'ematiques EA 4535
Moulin de la Housse BP 1039, 51100 Reims, FRANCE\\  \emph{nicolas.jacon@univ-reims.fr}

\end{document}